\documentclass{article}
\usepackage[utf8]{inputenc}
\usepackage{comment}
\usepackage{amsfonts,amsmath,amssymb,amsthm}
%amsrefs incompatible with biblatex
\usepackage{xcolor}
\usepackage{hyperref} %for a clickable TOC
\hypersetup{
    colorlinks=false, % make the links not colored
    linktoc=all % 'all' will create links for everything in the TOC
}
\usepackage{mathtools} % needed for my beloved coloneqq

% the [theorem] makes them all us the same numbering
\newtheorem{theorem}{Theorem}[section] % this creates the counter with section numbering
\numberwithin{theorem}{section} % enforces numbers to count within sections or subsections

\newtheorem{lemma}[theorem]{Lemma}
\newtheorem{proposition}[theorem]{Proposition}

\newtheorem{definition}{Definition}

\newcommand{\ep}{\varepsilon}

% temporary:
\DeclareMathOperator{\sgn}{sgn}
\DeclareMathOperator{\supp}{supp}
\DeclareMathOperator{\R}{\mathbb{R}}

% Tr command for convenience
\DeclareMathOperator{\Tr}{Tr}

% optional command to include notes I write in the document
% simply replace with the commented out version to show / not show comments. Clever

\newcommand{\F}{\mathcal{F}}
%\newcommand{\mynote}[1]{}

% using bibLaTeX
%%%\usepackage[style=amsalpha, citestyle=amsalpha]{biblatex}
%%%\addbibresource{references.bib}
%%%\emergencystretch=1em % to fix weird spacing of biblatex

% alternative that's incompatible with biblatex
% \usepackage{amsrefs}
\bibliographystyle{amsalpha}

\usepackage{authblk} % for the authors
\title{Free Moment Measures and Laws}
\author[1]{Juniper Bahr\thanks{jbahr@math.ucla.edu}}
\author[1]{Nick Boschert \thanks{nickboschert@math.ucla.edu}}
\affil[1]{University of California, Los Angeles}
\date{\today}

\begin{document}

\maketitle

\begin{abstract}
In \cite{klartag}, it was shown that convex, almost everywhere continuous functions coordinatize a broad class of probability measures on $\mathbb{R}^{n}$ by the map $U\mapsto \left(\nabla U\right)_{\#}e^{-U}dx$. We consider whether there is a similar coordinatization of non-commutative probability spaces, with the Gibbs measure $e^{-U}dx$ replaced by the corresponding free Gibbs law.  We call laws parameterized in this way free moment laws. We first consider the case of a single (and thus commutative) random variable and then the regime of $n$ non-commutative random variables which are perturbations of freely independent semi-circular variables. We prove that free moment laws exist with little restriction for the one dimensional case, and for small even perturbations of free semi-circle laws in the general case.
\end{abstract}

\textbf{\textit{Keywords---}}Free Probability, Optimal Transport, Operator Algebra, Brenier Map, Free Gibbs Law

\section{Introduction}

Fix a measure $\mu$ on $\mathbb{R}^{n}$; following \cite{klartag}, we say that $\mu$ is a \emph{moment measure} with \emph{potential $u$} when $u$ is a convex function satisfying $\mu = (\nabla u)_{\#} \rho$ and $\rho$ is the Gibbs measure $\frac{1}{Z} e^{-u} dx$.  We also say $\mu$ is the moment measure of $u$.  Cordero-Erausquin and Klartag in \cite{klartag} show that a finite Borel measure $\mu$ is a moment measure with some convex essentially continuous potential $u$ if and only if $\mu$ has barycenter zero (in particular, a finite first moment) and is not supported in a lower dimensional hyperplane. This result is proven variationally, although we will rely more directly on another variational approach taken in \cite{santambrogio} which is more closely related to optimal transport.  In Section 2 we describe a functional in terms of $\mu$ considered in \cite{santambrogio} whose optimizer is $\rho = e^{-u} dx$, as well as this functional's analog in free probability.

Voiculescu introduced free probability theory in \cite{Voiog}. He later introduced the notion of free entropy in a series of papers \cite{Voi1,Voi2,Voi3,Voi4,Voi5}; see also \cite{Voiculescu} for a summary.  The setting for free probability is that of non-commutative (nc) probability spaces---pairs $(M,\tau)$, where $M$ is a $*$-algebra (often a $C^*$ or $W^*$ algebra) and $\tau$ is a state, a functional which is both positive ($\tau(x^*x) \ge 0$) and satisfies $\tau(1) = 1$.  In this paper we will further assume our state $\tau$ is a trace, i.e., $\tau(ab) = \tau(ba)$.  The analogy to classical probability spaces $(\Omega, \mathcal{F}, P)$ is made by interpreting $M$ as the space of $\mathcal{F}$-measurable essentially bounded functions on $\Omega$, and $\tau$ as the expectation on this space with respect to $P$.

Consistent with this analogy, a nc random variable is an element of $M$.  Similarly, a vector valued nc random variable is an \(n\)-tuple $(X_{1},...,X_{n})$ of elements of $M$.  Note that when $M$ is a $C^{*}$ or $W^{*}$ algebra this can be slightly more restrictive than the classical notion, since we assume that these random variables have bounded norm, corresponding classically to an almost surely bounded random variable.  The linear map sending non-commutative polynomials $P$ to $\tau(P(X_1, \dots, X_n))$ is the \emph{law} of these random variables.

It is thus natural to ask if moment measures have an analog in free probability.  This is especially of interest to us because moment measures $\mu$ are in a sense parametrized by their potentials $u$.  Of course there is a natural way of doing this in $\mathbb{R}^n$, considering the density with respect to the Lebesgue measure. However, in the free case, the notion of density is ill-defined.

There is an analog of Gibbs measures $\tfrac{1}{Z} e^{-u} dx$ to free probability: free Gibbs laws (see \cite{BnS, Voiculescu}).  Where Gibbs laws minimize
\begin{align*}
    \mathcal{E}(\mu)+\int u d\mu,
\end{align*}
with $\mathcal{E}$ is the classical entropy ($\mathcal{E}(f\, dx) = \int f \log f \, dx$), free Gibbs laws minimize
\begin{align*}
    -\chi(\tau) + \tau(U),
\end{align*}
where $\chi$ is free entropy, first defined by Voiculescu (see the survey paper \cite{Voiculescu} for more information). Here $U$, which is assumed to be self-adjoint, is the potential for the free Gibbs law $\tau$.  
\begin{definition}[\cite{Voiculescu,Gui06,GMS}]
The free Gibbs law $\tau_U$ associated to the potential $U$ is the minimizer of $-\chi(\tau) + \tau(U)$ if it exists.
\end{definition}
There are two cases when such laws are known to exist. The first is when $U$ is a n.c. power series which is a small perturbation of quadratic (see \cite{GMS}). 

The second is in the single variable case when $U$ is bounded below, satisfies a growth condition, and satisfies a locally H\"older condtinuous-like condition (see \cite[Remark 3]{dM}) where we also get uniqueness.  In this latter case, the free entropy is the negative of log energy, ${\iint \log|s-t|d\mu(s)d\mu(t)}$, (see \cite{Voiculescu}).
The above optimization implies (and by \cite{GMS}, for $U$ which are small perturbations of quadratic the above, is equivalent to) the integration by parts formula or Schwinger-Dyson (type) equation:
\begin{align*}
\tau(P\cdot \mathcal{D}U)=\tau\otimes \tau\otimes \Tr(JP),
\end{align*}
where $U\in \mathbb{C}\langle X_{1},...,X_{n}\rangle$ is the potential of the law which is assumed to be self-adjoint, and $P$ is an arbitrary $n$-tuple of nc polynomials in $X$. Letting $M=W^{*}(X_{1},...,X_{n})$, we have that Voiculescu's cyclic gradient $\mathcal{D}=(\mathcal{D}_{x_{1}},...,\mathcal{D}_{x_{n}})$, the difference quotient derivative $\partial = (\partial_{x_{1}}, \dots, \partial_{x_n})$, and the (difference quotient) Jacobian $J$ are linear maps on the following spaces
\begin{align*}
\mathcal{D}_{x_i} &: M \to M
\\
\partial_{x_i} &: M\to M\otimes M^{op}
\\
J &: M^{n}\to M_{n \times n}\left(M\otimes M^{op}\right),
\end{align*}
defined by
\begin{align*}
\mathcal{D}_{x_{i}} (x_{i_{1}} \cdots x_{i_{n}}) 
    &= \sum_{j=1}^{n} \delta_{i,i_{j}} x_{i_{j+1}}
        \cdots x_{i_{n}} x_{i_{1}} \cdots x_{i_{j-1}}
\\
\partial_{x_{i}} (x_{i_{1}} \cdots x_{i_{n}})
    &= \sum_{j=1}^{n} \delta_{i,i_{j}}  x_{i_{1}} \cdots x_{i_{j-1}}
        \otimes x_{i_{j+1}} \cdots x_{i_{n}}.
\\
(JP)_{ij} 
    &= \partial_{x_j} P_{i}
\end{align*}

The above Schwinger-Dyson equation is the nc analog of 
\begin{align*}
    \mathbb{E} (f \cdot \nabla U) = \mathbb{E}(\Tr (\mathrm{Jac} f))
\end{align*}
which holds for log concave Gibbs laws $\frac{1}{Z} e^{-U} \, dx$, where $\mathrm{Jac}$ is the classical Jacobian. These free Gibbs laws are known to exist in the multi-variable case when $U$ is a small perturbation of the semi-circle potential (\cite{GMS}). In the single variable case, this can be relaxed to ordinary convexity along with growth conditions: $U(x)$ must go to infinity as $|x|$ does (and thus must grow at least as $|x|$).

We then define free moment laws as follows
\begin{definition}
The law $\tau$ of the nc random variables $X_1, \dots, X_n$ is a free moment law if there exists a self-adjoint nc power series $U$ such that the free Gibbs law $\tau_U$ is well defined and is the law of nc random variables $Y_1, \dots, Y_n$ such that
\begin{align*}
    (X_1, \dots, X_n) = (\mathcal{D} U)(Y_1, \dots, Y_n)
\end{align*}
\end{definition}

In the single variable case, laws have corresponding measures, and so we will discuss free moment measures instead of free moment laws.

Our main result is to show that certain free Gibbs laws are in fact free moment laws.

We organize the paper as follows.  In Section 2, we discuss the single variable case where we prove the most general existence result for free moment measures using a variational approach.   We will also provide a few examples and contrast them with the classical case. In Section 3, we discuss the existence of free moment laws for a certain class of free Gibbs laws which are close to the semicircular law.  We proceed in this case by a contraction mapping argument.  
%In Section 4, we prove that free moment laws are well approximated by the classical moment laws on large multi-matrix models.
%%%% COMMENTED OUT RIGHT NOW BECAUSE WE CAN'T FIX THIS RN

\subsection*{Acknowledgements}

Research supported by NSF grant DMS-1762360.  We would like to thank Dimitri Shlyakhtenko for insightful conversations and advice, as well as Max Fathi for introducing Dimitri (and hence us) to the concept.

\section{The Single Variable Case}

\subsection{Main Result for One Variable}

In the case of a single (non-commutative) random variable $X$ in the nc probability space $(M, \tau)$, the law of $X$ can
be given as a functional on the space of polynomials in a single variable by letting the law $\tau_X(p)$ for a polynomial $p(z)$ be $\tau_X(p) = \tau(p(X))$. Alternatively we can view the law of $X$ as a probability measure $\mu$, using positivity and the Riesz-Markov theorem. 

Suppose $X$ has law $\tau$ with corresponding measure $\mu$.  Then if $\tau$ is a free moment law, there exists $Y$ with law $\tau_u$ such that $X = (\mathcal{D} u)(Y)$.  As the cyclic gradient of a function in one variable is equal to the ordinary derivative, the pushforward condition is equivalently $X = (u')(Y)$.  If $\mu$ is the measure corresponding to $\tau$ and $\nu_u$ is the measure corresponding to $\tau_u$, then we have $\mu = (u')_{\#} \nu_u$.

We refer to the measures associated to free Gibbs laws in one dimension as free Gibbs measures. The authors emphasize that the idea of free Gibbs measures is not wholly novel; Indeed, free Gibbs laws were defined earlier (see Def 1), it was known (see \cite{Voiculescu},\cite{Voi2}) that $\chi(\tau)$ reduces in the single variable case to log energy, and minimizers of $-\chi(\tau)+\tau(U)$ have already been studied, e.g. in (\cite{dM}).
\begin{definition}
The free Gibbs measure $\nu_u$ associated to the convex function $u : \R \to \R$ is the measure corresponding to the free Gibbs law $\tau_u$, if it exists.  In other words, $\nu_u$ is the minimizer of
\begin{align*}
    \iint \log |s-t| \,d\mu(s)\,d\mu(t) + \int u(s)\,d\mu(s)
\end{align*}
if it exists.
\end{definition}
\begin{definition}
A real probability measure $\mu$ is a free moment measure if
\begin{align*}
    \mu = (u')_{\#} \nu_u
\end{align*}
for some convex function $u : \mathbb{R} \to \mathbb{R}$.
\end{definition}

Our main result in this section is Theorem \ref{main-thm-section-2} which implies that if $\mu$ is a probability measure on $\mathbb{R}$ other than $\delta_0$ with finite second moment and barycenter zero, there exists a convex $u : \mathbb{R} \to \mathbb{R}$ such that $\mu = (u')_{\#} \rho$ and $\rho = \nu_u$, where $\nu_u$ is the free Gibbs measure associated to the potential $u$.  Observe also that if $\mu$ is centered, then $u$ must have a derivative which changes signs, and so $u(x) \to \infty$ as both $x \to \pm \infty$.  Through prior understanding of free Gibbs measures, we'll also have that $\rho$ is absolutely continuous with respect to Lebesgue measure and $2 \pi H(\rho) (x) = u'(x)$ for any $x \in \supp (\rho)$.  Here $H \rho$ is the Hilbert transform of $\rho$, given by the principal value integral
\begin{align*}
    H \rho(t) = \frac{1}{\pi} \textrm{ PV\!\!} \int_{\mathbb{R}} \frac{1}{t-x} \, d\rho(x).
\end{align*}

For a brief computational guide to solving $2 \pi (H \rho)(x) = u'(x)$ for $x \in \supp{\rho}$ for a fixed $u$, see the appendix.  See also \cite{dM} for more examples.

%Given the free Gibbs measure $\nu_u$ associated to the convex potential $u$, the Schwinger-Dyson equation can be written as $2 \pi H \nu_u (x) = u'(x)$ for any $x \in \textrm{supp}(\nu_u)$.

%The function $H \rho$ is the Hilbert transform of $\rho$, given by the principal value integral

%We can thus rephrase Theorem \ref{main-result-one-var} as follows: given a centered probability measure $\mu$ with \textbf{FIRST?SECOND? moment finite}, there exists a probability measure $\rho$ with \textbf{WHICH MOMENT FINITE?} and a convex function $u$ such that $(u')_{\#} \rho = \mu$ and $2 \pi H \rho (x) = u'(x)$ for any $x \in \textrm{supp}(\rho)$. For a brief computational guide to solving the latter equation for a fixed $u$, see the appendix. 

\subsection{The Functional \texorpdfstring{$\mathcal{F}(\rho)$}{F(ρ)}}

In the classical case of moment measures, we
are searching for $\rho = \tfrac{1}{Z} e^{-u} \, dx$, the log concave Gibbs measure with
real convex potential $u$ satisfying $(\nabla u)_{\#} \rho = \mu$ for some $\mu$.  Here $Z$
is the constant that makes $\rho$ a probability measure.

It is possible to find such $\rho$ when $\mu$ has barycenter zero and is not supported on a hyperplane (which for $\mathbb{R}^1$ only means it isn't $\delta_0$) \cite{klartag}.  The measure $\rho$ can be found by considering the functional
\begin{align*}
  \int \rho \log \rho \, dx + \tfrac{1}{2} \int x^2 \rho(x) \, dx + \tfrac{1}{2} \int x^2 \, d\mu- \tfrac{1}{2} W_2^2(\rho, \mu) &= \int \rho \log \rho \, dx + T(\rho, \mu)
  \\
                                                                                               &\eqqcolon \mathcal{E}(\rho) + T(\rho,\mu)
\end{align*}
where $W_2$ is the Wasserstein distance between $\rho$ and $\mu$,
$T(\rho,\mu)$ is the maximal correlation functional defined as follows and $\mathcal{E}$ is the negative differential entropy, $\mathcal{E}(\rho \,dx) = \int \rho \log \rho \,dx$.

The measure $\rho$ satisfying $(\nabla u)_{\#} \rho = \mu$ and $\rho = \tfrac{1}{Z} e^{-u} \,dx$ is then the minimizer of $\mathcal{E}(\rho) + T(\rho,\mu)$ when such a $\rho$ exists \cite{santambrogio}.

\begin{definition}[\cite{santambrogio}]
The maximal correlation functional $T(\rho,\mu)$ is given by
\begin{align*}
    T(\rho,\mu) &= \sup \left\{\int x \cdot y \,d\gamma \mid \gamma \in \Pi(\rho,\mu)\right\}
    \\
    &= \frac{1}{2} \int x^2 \,d\rho + \frac{1}{2} \int x^2 \,d\mu - \frac{1}{2} W_2^2(\rho,\mu)
\end{align*}
where $\Pi(\rho,\mu)$ is the set of transport plans, i.e. probability measures on $\mathbb{R}^n \times \mathbb{R}^n$ with marginals $\rho$ and $\mu$.
\end{definition}

We replace the entropy term of $\mathcal{E}(\rho) + T(\rho,\mu)$ with free entropy, which in the 1-D case is the log energy (up to a constant) \cite{Voi1}:
\begin{align*}
  L(\rho) = \iint - \log |s-t| \, d\rho(s) \,d \rho(t).
\end{align*}

This is justified by the following proposition:
\begin{proposition}[\cite{santambrogio} p.\ 14]
  For $V$ convex, the minimizer of the functional
  \begin{align*}
    \mathcal{E}(\rho) + \int V \rho \,dx = \int \rho \log \rho + V \rho \,dx
  \end{align*}
  over $\rho$ probability measures with finite second moment is the density of the Gibbs measure $\rho = \tfrac{1}{Z} e^{-V}$. 
\end{proposition}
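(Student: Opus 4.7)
The plan is to apply the standard Gibbs variational principle, rewriting the functional as a relative entropy plus a constant and then invoking non-negativity of relative entropy. First, I would set $Z := \int_\R e^{-V(x)}\, dx$ (assumed finite, so that the candidate Gibbs density $\rho_* := Z^{-1} e^{-V}$ is well defined) and observe that any competitor $\rho$ achieving a finite value of the functional must be absolutely continuous with respect to Lebesgue measure, since by the standing convention $\mathcal{E}(\rho) = +\infty$ otherwise. We may therefore restrict attention to probability densities $\rho$ with finite second moment for which $\int V \rho\, dx$ is also finite.

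The key algebraic step is to note that $V = -\log \rho_* - \log Z$, so for any such density $\rho$,
\begin{align*}
\int_\R \rho \log \rho + V \rho \, dx = \int_\R \rho \log\!\left(\frac{\rho}{\rho_*}\right) dx - \log Z.
\end{align*}
Since $\log Z$ does not depend on $\rho$, minimizing the original functional over admissible densities is equivalent to minimizing the relative entropy $D(\rho \,\|\, \rho_*) := \int_\R \rho \log(\rho/\rho_*)\, dx$.

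Non-negativity of $D(\rho \,\|\, \rho_*)$ follows from the elementary inequality $t \log t - t + 1 \ge 0$ for all $t > 0$, with equality if and only if $t = 1$. Applying this pointwise with $t = \rho/\rho_*$ (on the set where $\rho_* > 0$, which is all of $\R$) and multiplying through by $\rho_*$ gives $\rho \log(\rho/\rho_*) \ge \rho - \rho_*$; integrating over $\R$ and using that $\int \rho = \int \rho_* = 1$ yields $D(\rho \,\|\, \rho_*) \ge 0$, with equality if and only if $\rho = \rho_*$ almost everywhere. This identifies $\rho_* = Z^{-1} e^{-V}$ as the unique minimizer of the functional.

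There is essentially no serious obstacle here: the argument reduces to a one-line computation once the relative-entropy rewrite is in hand. The only mild caveats are that one needs $Z < \infty$ for $\rho_*$ to be defined at all (which for merely convex $V$ requires some growth at infinity, but is the standing hypothesis under which the proposition is meaningful) and that $\rho_*$ itself must lie in the admissible class, i.e., have finite second moment, which again follows from sufficient growth of $V$ and is implicit in the setting of \cite{santambrogio}.
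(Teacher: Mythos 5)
The paper does not prove this proposition itself; it is cited directly from \cite{santambrogio}, and the argument there is precisely the relative-entropy rewrite you give. Your proof is correct and complete: the rewrite $\int \rho\log\rho + V\rho\,dx = D(\rho\,\|\,\rho_*) - \log Z$ followed by the pointwise inequality $t\log t \ge t-1$ is the standard Gibbs variational principle, and you correctly flag the two real hypotheses that make the statement meaningful, namely that $Z<\infty$ and that $\rho_*$ lies in the admissible class (finite second moment). One small observation worth making explicit: convexity of $V$ plays no role in your argument, and indeed it plays no role in the logical content of the proposition either, beyond being part of the standing hypotheses that (together with growth at infinity) guarantee integrability of $e^{-V}$ and $x^2 e^{-V}$. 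Your proof is thus slightly more general than the statement as phrased, which is fine and worth being aware of.
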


As free entropy in the 1-D case is log energy up to a  constant, we recall that the minimizer of the functional
\begin{align*}
    L(\rho) + \int V \,d\rho
\end{align*}
is the free Gibbs measure $\nu_V$ if it exists.  Thus we see how $\mathcal{E}$ and $L$ play analogous roles for Gibbs measures and free Gibbs measures.

%This next proposition just a rewrite of the definition of $\nu_V$ using the formula for free entropy of a single variable.
%\begin{proposition}[\cite{Voiculescu}] \label{freegibbsfnctl}
%  For $V$ convex, the minimizer of the functional
%  \begin{align*}
%    L(\rho) + \int V \,d \rho = \iint - \log |s-t| \,d\rho(s)\,d\rho(t) + \int V \,d\rho
%  \end{align*}
%  is the free Gibbs measure $\nu_V$. 
%\end{proposition}

Following this analogy, we define the following functional:
\begin{align}
  \mathcal{F}(\rho) = L(\rho) + T(\rho,\mu). \label{1d-std-functional}
\end{align}

\subsection{Sufficiency of Minimizing \texorpdfstring{$\mathcal{F}(\rho)$}{F(ρ)}}

 Throughout this section, $\rho$ will be assumed to have finite second moment unless otherwise specified.

Following \cite{santambrogio}, we can rewrite $T(\rho,\mu)$ a few ways.  First, we use the maximal correlation formulation:
\begin{align*}
  T(\rho,\mu) = \sup \left\{\int x \cdot y \,d \gamma(x,y) \;\middle|\; \gamma \in \Pi(\rho,\mu)\right\}
\end{align*}
where $\Pi(\rho,\mu) = \{ \gamma \in \mathcal{P}(\mathbb{R} \times \mathbb{R}) \mid (\pi_x)_{\#} \gamma = \rho, (\pi_y)_{\#} \gamma = \mu\}$ is the space of measures with marginals $\rho$ and $\mu$.  Here $\mathcal{P}(X)$ denotes the space of probability measures on $X$.

This maximization problem has an equivalent dual problem, a minimization with the same optimal value:
\begin{align*}
  T(\rho,\mu) = \min \left\{\int u \,d\rho + \int u^* \,d\mu \;\middle|\; u \text{ convex, lower semicontinuous}\right\}.
\end{align*}

This lets us rewrite (\ref{1d-std-functional}) as
\begin{align}
  \mathcal{F}(\rho) = \min \left\{ \underbrace{\iint - \log |s-t| \,d\rho(s) \,d\rho(t) + \int u \,d\rho + \int u^* \,d\mu}_{\mathcal{G}(\rho,u)} \right\} \label{1d-general-functional}
\end{align}
minimizing over the set where $\rho \in \mathcal{P}(\mathbb{R})$, $\mathbb{E}_{\rho}(|x|)<\infty$, and $u$ is convex and lower semicontinuous.  Here $u^*$ denotes the Legendre transform
\begin{align*}
  u^*(y) = \sup_x \left(x\cdot y - u(x)\right).
\end{align*}  
We'll define $\mathcal{G}(\rho,u) = \iint - \log |s-t| \,d\rho(s) \,d\rho(t) + \int u \,d\rho + \int u^* \,d\mu$ and so $\mathcal{F}(\rho) = \min_u \mathcal{G}(\rho,u)$.

By minimizing $\mathcal{G}(\rho,u)$ first in $u$ for each $\rho$, we can appeal to Santambrogio's analysis of the maximal correlation functional and deduce that $(u')_{\#} \rho = \mu$ \cite{santambrogio}.  Next, for optimal $u$, minimizing in $\rho$ lets us rely on \cite{dM} to see that $\rho = \nu_u$, the free Gibbs measure associated to $u$.  This is explained in further detail in Theorem \ref{main-thm-section-2}.

\subsection{Minimizing the functional}

We now adapt the proof from \cite{santambrogio} to show that $\mathcal{F}$ has a minimizer.  First we prove weak lower semicontinuity of the $L(\rho)$ term and show that it's bounded below by an expression involving the first moment of $\rho$, a bound we will combine with a known bound on $T(\rho,\mu)$.  We then prove a kind of convexity of $L(\rho)$ in the Wasserstein space $\mathbb{W}_2$.  We use this to deduce the existence and uniqueness of the minimizer of $\mathcal{F}(\rho) = L(\rho) + T(\rho,\mu)$.

\begin{lemma} \label{first-moment-bound} Assume that $\rho$ is a probability measure with finite first moment.  Then the log energy $L(\rho)$ satisfies the bound $L(\rho) \ge -\sqrt{2\int |s| \,d\rho(s)}$.

Furthermore, when $\rho_n$ and $\rho$ are probability measures with $\rho_n \rightharpoonup \rho$ weakly and $\int |x| \,d\rho_n \le C$ for some $C > 0$ and all $n \in \mathbb{N}$, then $L(\rho) \le \liminf_{n \to \infty} L(\rho_n)$.  In short, weak lower semi-continuity of $L$ if the first moments are uniformly bounded. 

\begin{proof} To bound $L(\rho)$, we split it into three terms with a method inspired by \cite{santambrogio}.  In that paper, Santambrogio splits up the integrand of the entropy term into three parts using a Legendre transform of $x \log x$ for a key inequality.

We need an analogous inequality:
\begin{align*}
    -1 + \log \left(\frac{1}{h}\right) - \log |x| \ge -|x| h
\end{align*}
for any $x \neq 0$ and $y > 0$.  This inequality can be derived from the Legendre transform of $- \log x$, the analogous term in our case, but it is more easily derived from an application of $1 + \log a \le a$ where $a = |x| h$.

With this inequality, we consider the decomposition:
\begin{align*}
    L(\rho) &= \iint -1 + \log \left(\frac{1}{h}\right) - \log |s-t| + h |s-t| \,d\rho(s) \,d\rho(t)
    \\
    &\quad \iint -\log \left(\frac{1}{h}\right) \,d\rho(s)\,d\rho(t) + \iint 1 - |s-t| h \,d\rho(s) \,d\rho(t)
    \\
    &= \mathrm{I} + \mathrm{II} + \mathrm{III}.
\end{align*}
While this decomposition holds regardless of $h > 0$, we'll select $h$ inspired by the proof in \cite{santambrogio}.  We choose
\begin{align*}
    h(s,t) &= e^{-\sqrt{|s-t|}}.
\end{align*}

Observe that term (I) has a positive integrand by the inequality mentioned above.  Since the integrand is continuous and bounded below, we have that (I) is lower semi-continuous with respect to weak convergence of measures.

Next, we bound the second term
\begin{align*}
    II &= \iint - \log \left(\frac{1}{e^{-\sqrt{|s-t|}}}\right) \,d\rho(s)\,d\rho(t) = \iint - \sqrt{|s-t|} \,d\rho(s)\,d\rho(t)
    \\
    &\ge - \sqrt{\iint |s-t| \,d\rho(s) \,d\rho(t)}
    \\
    &\ge -\sqrt{\iint |s| + |t| \,d\rho(s) \,d\rho(t)} = - \sqrt{2 \int |s| \,d\rho(s)}
\end{align*}
where the first inequality follows by Cauchy-Schwarz and the fact that $\rho$ is a probability measure.

Note that $\sqrt{x}/|x| \to 0$ as $x \to \infty$.  We'll use this to show that (II) is weakly lower semi-continuous for $\rho_n$ having bounded first moments.  

Observe that as $\int |x| \,d\rho_n \le C$, we have
\begin{align*}
    \left| \int_{[-M,M]^c} -\sqrt{x} \,d\rho_n \right| \le \frac{\sqrt{M}}{M} \int_{[-M,M]^c} |x| \,d\rho_n \le \frac{C}{\sqrt{M}}
\end{align*}
for any $M>1$.  Fix $\ep > 0$.  Thus we may choose $M$ so large that $\left| \int_{[-M,M]^c} -\sqrt{x} \,d\rho_n\right| < \ep$.  We now write
\begin{align*}
    \iint - \sqrt{|s-t|} \,d\rho_n(s)\,d\rho_n(t) &= \iint_{|s-t|>M} - \sqrt{|s-t|}\,d\rho_n(s)\,d\rho_n(t)
    \\&\quad + \iint - \sqrt{|s-t|} \chi_{|s-t|\le M} \,d\rho_n(s) \,d\rho_n(t).
\end{align*}
The first term is bounded in absolute value by $\ep$.  As the second term is integration against a lower semi-continuous functions which is bounded from below, it is a lower semi-continuous function with respect to weak convergence of measures.  

Combining these facts,
\begin{align*}
    \iint - \sqrt{|s-t|} \,d\rho(s)\,d\rho(t) &\le \liminf_{n \to \infty} \iint - \sqrt{|s-t|} \,d\rho_n(s) \,d\rho_n(t) + 2 \ep
\end{align*}
for any $\ep > 0$ and thus we have the desired weak lower semi-continuity of this term.

Finally we write
\begin{align*}
    III = \iint 1 - |s-t| e^{-\sqrt{|s-t|}} \,d\rho(s)\,d\rho(t)
\end{align*}
and observe that the integrand is bounded between 0 and 1, so $0 \le III \le 1$.  The integrand being continuous and bounded implies that this term is continuous with respect to the weak convergence of measures.

Combining these inequalities, we have
\begin{align*}
    L(\rho) = I + II + III \ge 0 - \sqrt{2\int|s|\,d\rho(s)} + 0
\end{align*}
as desired.

Furthermore, we have the desired weak lower semi-continuity in each term, and so it holds that $L(\rho) \le \liminf_{n \to \infty} L(\rho_n)$ when the $\rho_n$ all have bounded first moments.
\end{proof}

\end{lemma}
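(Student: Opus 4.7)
The plan is to mimic Santambrogio's classical argument for the entropy term, replacing his use of the Legendre inequality $x\log x \ge x - 1$ with an analogous duality for $-\log x$. Concretely, from $1 + \log a \le a$ applied with $a = |x|h$, we get the pointwise inequality
\begin{align*}
    -\log|x| \ge -1 + \log(1/h) - |x|h \cdot (-1),
\end{align*}
or equivalently $-1 + \log(1/h) - \log|x| + h|x| \ge 0$ for all $x \ne 0$ and $h > 0$. This lets us decompose the integrand of $L(\rho)$ as a sum of three pieces: a nonnegative one, an explicit one that we can control by moments, and a bounded one.

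The first step is to write $-\log|s-t| = \mathrm{I}(s,t) + \mathrm{II}(s,t) + \mathrm{III}(s,t)$, where $\mathrm{I}$ is the nonnegative quantity above (with a choice of $h = h(s,t) > 0$ to be made), $\mathrm{II} = -\log(1/h) = \log h$, and $\mathrm{III} = 1 - h|s-t|$. Choosing $h(s,t) = e^{-\sqrt{|s-t|}}$ makes $\mathrm{II} = -\sqrt{|s-t|}$ and ensures that $h|s-t| = |s-t|e^{-\sqrt{|s-t|}}$ is uniformly bounded on $[0,\infty)$, so $\mathrm{III}$ is bounded and continuous in $(s,t)$. The second step is to extract the bound: the integral of $\mathrm{I}$ over $\rho\otimes\rho$ is nonnegative; by Cauchy–Schwarz and the fact that $\rho$ is a probability measure,
\begin{align*}
    \iint -\sqrt{|s-t|}\,d\rho(s)\,d\rho(t) \ge -\sqrt{\iint |s-t|\,d\rho(s)\,d\rho(t)} \ge -\sqrt{2\int|s|\,d\rho(s)};
\end{align*}
and the integral of $\mathrm{III}$ lies in $[0,1]$. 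Summing gives $L(\rho) \ge -\sqrt{2\int|s|\,d\rho(s)}$.

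For the weak lower semi-continuity statement, I treat each piece separately. The term $\iint \mathrm{I}\,d\rho_n\,d\rho_n$ is the integral of a nonnegative lower semi-continuous function (it is continuous on $\{s \ne t\}$ and $+\infty$-free by our inequality), hence lower semi-continuous under weak convergence by the standard Portmanteau-type result for nonnegative lower semi-continuous integrands. The term involving $\mathrm{III}$ is the integral of a bounded continuous function on $\mathbb{R}^2$, so it is weakly continuous. The delicate term is $\iint -\sqrt{|s-t|}\,d\rho_n\,d\rho_n$, which is not bounded below by a continuous function. Here I would exploit that $\sqrt{|s-t|}/|s-t| \to 0$ at infinity: given $\varepsilon > 0$, pick $M$ large so that for every $n$ the contribution from $\{|s-t|>M\}$ is at most $\varepsilon$ (uniform in $n$, using the uniform bound $\int|x|\,d\rho_n \le C$), while the truncated integrand $-\sqrt{|s-t|}\,\chi_{|s-t|\le M}$ is bounded and lower semi-continuous, hence its integral is lower semi-continuous under weak convergence. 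Passing $\varepsilon \to 0$ yields the desired semi-continuity.

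The main obstacle is precisely the $\mathrm{II}$ term: unlike in Santambrogio's setting, $\sqrt{|s-t|}$ is neither bounded nor weakly continuous, so the naive Portmanteau argument fails, and one genuinely needs the uniform first-moment bound together with the sublinear growth of $\sqrt{\cdot}$ to produce the uniform tail control. Once this tail estimate is in hand, everything else is a bookkeeping assembly of the three pieces.
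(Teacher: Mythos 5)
Your proposal is correct and follows essentially the same route as the paper: the same elementary inequality $1+\log a\le a$ with $a=|x|h$, the same three-piece decomposition with the same choice $h(s,t)=e^{-\sqrt{|s-t|}}$, the same Cauchy--Schwarz bound for the $\mathrm{II}$ term, and the same uniform tail truncation (exploiting $\sqrt{x}/x\to 0$ and the uniform first-moment bound) to recover weak lower semi-continuity of $\mathrm{II}$. The only cosmetic slip is calling the integrand of $\mathrm{I}$ ``$+\infty$-free'': it does blow up to $+\infty$ on the diagonal $s=t$, but since it is nonnegative and lower semi-continuous that causes no harm, which is exactly what the argument needs.
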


We will need another lemma to obtain uniqueness of the minimizer.  We'll show that $L(\rho)$ is displacement convex, i.e., convex along geodesics in the Wasserstein space $\mathbb{W}_2$.
\begin{lemma}
\label{L-disp-convex}
The functional $L(\rho)$ is displacement convex.  Specifically, if $\rho_t$ is any geodesic connecting $\rho_0$ to $\rho_1$ in the Wasserstein space $\mathbb{W}_2$, then $L(\rho_t)$ is convex.

Furthermore, $L$ is strictly displacement convex for measures which are not translates.  That is, if $\rho_0$ and $\rho_1$ are not translates of each other, by which we mean one is not the pushforward of the other under a map of the form $x \mapsto x+c$, then $L(\rho_t) < (1-t)L(\rho_0) + t  L(\rho_1)$.
\end{lemma}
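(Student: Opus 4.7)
The plan is to exploit the explicit description of Wasserstein geodesics in one dimension: if $T:\mathbb{R}\to\mathbb{R}$ is the (essentially unique) monotone non-decreasing optimal transport map sending $\rho_0$ to $\rho_1$, then $\rho_t = ((1-t)\mathrm{id} + tT)_{\#}\rho_0$. Since the inequality $L(\rho_t) \le (1-t)L(\rho_0) + t L(\rho_1)$ is trivial when the right-hand side is $+\infty$, I may assume both endpoints have finite log energy, which forces $\rho_0$ and $\rho_1$ to be atomless.

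First I would rewrite
\begin{align*}
L(\rho_t) = \iint -\log\bigl|(1-t)(s-u) + t(T(s)-T(u))\bigr|\, d\rho_0(s)\, d\rho_0(u)
\end{align*}
by pushing the integral back along $(1-t)\mathrm{id}+tT$. The key structural fact is that monotonicity of $T$ makes $s-u$ and $T(s)-T(u)$ have the same sign (or makes one of them zero) for every pair $(s,u)$; this is the one-dimensional feature that makes the argument work. Consequently the absolute value inside the logarithm is simply the convex combination
\begin{align*}
\bigl|(1-t)(s-u)+t(T(s)-T(u))\bigr| = (1-t)|s-u| + t|T(s)-T(u)|.
\end{align*}

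Next I would apply the elementary fact that for $a,b\ge 0$ with $a+b>0$, the map $t\mapsto -\log((1-t)a+tb)$ is convex on $[0,1]$, since $-\log$ is convex and the inner function is affine; it is moreover strictly convex unless $a=b$. Applying this pointwise in $(s,u)$ and integrating against $d\rho_0(s)\, d\rho_0(u)$ yields convexity of $t\mapsto L(\rho_t)$ (the diagonal $s=u$ has $\rho_0\otimes\rho_0$-measure zero under our atomless assumption, so it causes no trouble).

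For the strict convexity clause, suppose $L(\rho_t) = (1-t)L(\rho_0) + t L(\rho_1)$. Then the pointwise inequality must be an equality for $(\rho_0\otimes\rho_0)$-a.e.\ pair $(s,u)$, giving $|s-u| = |T(s)-T(u)|$; combined with monotonicity this forces $T(s)-T(u) = s-u$ for a.e.\ $(s,u)$, hence $T(x) = x+c$ for $\rho_0$-a.e.\ $x$, i.e., $\rho_1$ is a translate of $\rho_0$. The main technical obstacle is organizing the case analysis cleanly around the possibility of infinite log energy and around the diagonal $s=u$; once one restricts (as one may) to endpoints with finite $L$, the monotonicity-of-$T$ trick makes the rest of the argument essentially a convex combination inequality under the integral sign.
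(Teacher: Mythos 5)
Your argument follows essentially the same route as the paper's proof, just phrased via a monotone transport map $T$ rather than the optimal transport plan $\gamma$. You do make explicit one step the paper leaves tacit --- that monotonicity forces $s-u$ and $T(s)-T(u)$ to share a sign, which is what lets $|(1-t)(s-u)+t(T(s)-T(u))|$ split as the convex combination $(1-t)|s-u|+t|T(s)-T(u)|$ before convexity of $-\log$ is applied --- but the underlying reasoning is identical.
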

\begin{proof} Let $\rho_{0}$ and $\rho_{1}$ to be two measures with finite second moments (so that they're in $\mathbb{W}_2$).  Then let $\gamma$ be the optimal transport plan between them (see \cite{transport} or \cite{villani} for a thorough introduction to these ideas), and consider $\rho_t = \pi_{t\#} (\gamma)$ where $\pi_t(x,y) = (1-t)x+ty$.  Note that $\rho_t$ is the geodesic connecting $\rho_0$ and $\rho_1$ in $\mathbb{W}_2$, and all geodesics have this form \cite[Chap.\ 5]{transport}.  We then observe
\begin{align*}
L(\rho_{t}) &= \iint - \log|s-r| \,d\rho_{t}(s) \,d\rho_{t}(r)
\\
 &= \iint -\log\big|(1-t)x+ty-(1-t)x'-ty'\big| \,d\gamma(x,y)\,d\gamma(x',y')
 \\
 &= \iint - \log \big|t (y-y') + (1-t)(x-x')\big| \,d\gamma(x,y)\,d\gamma(x',y')
\end{align*}
By the convexity of $- \log$, the integrand is strictly less than
\[-\left((1-t)\log|x-x'|+t\log|y-y'|\right)\] unless $x-y=x'-y'$.  Thus $L(\rho_t)$ is strictly less
than $(1-t)L(\rho_{0})+tL(\rho_{1})$ unless $\gamma$ is supported on a
translate of the diagonal, which can only occur if $\rho_{0}$ and $\rho_{1}$ are
translates of one another. 
\end{proof}

We aim to minimize $\mathcal{F}$, but we need to show now that the minimizer will have finite second moment.

\begin{proposition} \label{free-gibbs-cpt-supp}
Let $u : \mathbb{R} \to \mathbb{R}$ be convex and have a minimum so that $u(x) \ge a|x| + b$ for some $a > 0$ and real $b$.

Suppose $\rho$ is the free Gibbs measure associated to $u$ and has finite first moment.  Then $\rho$ is compactly supported and absolutely continuous with respect to Lebesgue measure.  Furthermore, $2\pi H\rho = u'$ on the support of $\rho$.
\end{proposition}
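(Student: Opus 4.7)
The plan is to derive Frostman-type variational conditions for the minimizer $\rho$ of $L(\rho) + \int u\,d\rho$, and then read off the three conclusions from these conditions. Setting
\[
\Phi(x) := -2\int \log|x-t|\,d\rho(t) + u(x),
\]
I would take perturbations $\rho_s = (1-s)\rho + s\sigma$ along admissible competitors $\sigma$ and use stationarity of the functional at $s=0$ to obtain a constant $C$ with $\Phi(x) = C$ for $\rho$-a.e.\ $x$ and $\Phi(x) \ge C$ quasi-everywhere. Some care is needed to justify splitting the logarithmic integrand into its positive and negative parts, but the finite first moment of $\rho$ together with the bound $u(x) \ge a|x| + b$ ensures $\int u\,d\rho < \infty$, and minimality forces $L(\rho) < \infty$, so the relevant integrals are finite. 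This step can also be quoted directly from the analysis in \cite{dM}.

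Compact support follows immediately. For $|x|$ large, the finite first moment of $\rho$ gives $\int \log|x-t|\,d\rho(t) = \log|x| + o(1)$, so
\[
\Phi(x) \ge a|x| - 2\log|x| + O(1) \to \infty \qquad \text{as } |x| \to \infty.
\]
Hence there exists $R > 0$ with $\Phi(x) > C$ for all $|x| > R$, and the second Frostman inequality forces $\supp(\rho) \subseteq [-R,R]$.

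To get the Schwinger--Dyson type identity, I differentiate the equation $\Phi(x) = C$ on the support. Since $\frac{d}{dx}\int \log|x-t|\,d\rho(t) = \mathrm{PV}\!\int \frac{1}{x-t}\,d\rho(t) = \pi H\rho(x)$, the equality $\Phi \equiv C$ yields $2\pi H\rho(x) = u'(x)$ for $x \in \supp(\rho)$. Pointwise justification of differentiating through the principal value integral is where the classical hypotheses in \cite{dM} (local H\"older type regularity) enter; given convexity of $u$ (hence local Lipschitz regularity of $u'$ on the interior) and the compactness of the support just established, the hypotheses of the cited result are met.

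Finally, absolute continuity follows by inverting the singular integral equation $2\pi H\rho = u'$ on the compact set $\supp(\rho)$ via the Tricomi / inverse Hilbert transform formula: on any interval contained in $\supp(\rho)$, the density of $\rho$ is expressible in closed form in terms of $u'$ and endpoint data, and this expression is manifestly absolutely continuous with respect to Lebesgue measure. The main obstacle in the whole argument is the first step: verifying the Frostman conditions carefully under our hypotheses (which are weaker than uniform convexity, since we only assume convexity plus linear coercivity). The cleanest path is to check that our assumptions on $u$ fit within the framework of \cite[Remark 3]{dM} and invoke the conclusions there; all three claims of the proposition are then consequences of the resulting Euler--Lagrange description of $\rho$.
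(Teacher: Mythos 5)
Your proposal matches the paper's approach: both lean on \cite[Remark 3, Thm.~1]{dM} for the Euler--Lagrange/Frostman characterization of $\rho$ (absolute continuity, $2\pi H\rho = u'$ on the support, and support contained in the minimal set of the effective potential) and then use the linear coercivity $u(x)\ge a|x|+b$ together with the finite first moment of $\rho$ to force the effective potential to $+\infty$, hence compact support. Two side remarks you make are slightly off though not fatal: the asymptotic $\int\log|x-t|\,d\rho(t)=\log|x|+o(1)$ is overstated (only the upper bound $\int\log|x-t|\,d\rho(t)\le\log(|x|+1)+\int\log(|t|+1)\,d\rho(t)$ is justified by finite first moment, and that one-sided bound is all that is needed), and deriving absolute continuity by Tricomi inversion of $2\pi H\rho=u'$ is mildly circular, since making pointwise sense of $H\rho$ and differentiating the logarithmic potential already presupposes the regularity that \cite{dM} supplies directly --- which is why, as you note, invoking \cite{dM} up front is the cleanest route and is what the paper does.
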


\begin{proof}
By \cite[Remark 3]{dM} and noting that the function $u$ satisfies their condition (1.2),  Theorem 1 of \cite{dM} guarantees that $\rho$ is absolutely continuous with respect to Lebesgue measure and that the support of $\rho$ is contained in the set of points such that
\begin{align*}
    h(x) = \int - \log |x-y| \,d\rho(y) + u(x)
\end{align*}
is minimal.  We can also see this by taking a first variation of the functional $\iint -\log|s-t| \,d\rho(s)\,d\rho(t) + \int u(t) \,d\rho(t)$ and considering the optimality conditions.  Theorem 1 of that paper also guarantees that $2 \pi H \rho = u'$ on the support of $\rho$, noting that $\beta = 2$ for our case in \cite[Eqn.\ 1.17]{dM}, although using absolute continuity we could also get this by considering optimality conditions for the functional defining $\nu_u$ and differentiating under the integral.

Since $U(x) \ge a|x| + b$, $-\log$ is non-increasing, and $z\mapsto \log(1+z)$ is subadditive on the positive reals, we have
\begin{align*}
    h(x) &\ge \int - \log |x-y| \,d\rho(y) + a|x| + b
    \\
    &\ge \int -\log (|x|+|y|+1) \,d\rho(y) + a|x| + b
    \\
    &\ge \int -\log(|x|+1) \,d\rho(y) + \int -\log(|y|+1) \,d\rho(y) + a|x| + b
    \\
    &\ge -\log(|x|+1) + \int - \log (|y|+1)\,d\rho(y) + a|x| + b.
\end{align*}
Note that the finite first moment of $\rho$ implies $\int -\log(|y|+1) \,d\rho(y) > -\infty$, since $\log$ has sublinear growth at $\infty$.  Thus $h(x) \to \infty$ as $x \to \infty$ or $x \to -\infty$.  Note that $h(x)$ isn't constantly $\infty$ as its integral gives the functional minimized by $\rho$.  Therefore the set where $h$ is its minimum value is compact, so $\supp(\rho)$ is compact.
\end{proof}

We now show the existence of a minimizer of $\mathcal{F}$ and prove the main theorem of this section.
\begin{theorem} \label{main-thm-section-2}
Let $\mu \neq \delta_0$ be a probability measure with finite second moment.  The functional $\mathcal{F}(\rho) = L(\rho) + T(\rho,\mu)$ has a minimizer in $\mathcal{P}_2$, the space of probability measures with finite second moment, which is unique up to translation, i.e., unique up to a pushforward by the map $x \mapsto x+c$.  

The minimizer $\hat{\rho}$ is also absolutely continuous with respect to Lebesgue measure, has compact support, and satisfies $2\pi H \hat{\rho} = u'$ on its support. 

Furthermore, the following are equivalent:
\begin{enumerate}
    \item $\hat{\rho}$ is the unique centered minimizer of $\mathcal{F}(\rho)$
    \item $\hat{\rho}$ satisfies $\hat{\rho} = \nu_u$ for some convex $u$ and $(u')_{\#} \hat{\rho} = \mu$.
\end{enumerate}

\end{theorem}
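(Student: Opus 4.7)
The plan is to adapt Santambrogio's direct variational argument for $\mathcal{E}(\rho) + T(\rho,\mu)$ to the present functional, substituting the log-energy $L$ for the entropy $\mathcal{E}$ and invoking Lemmas \ref{first-moment-bound} and \ref{L-disp-convex} together with Proposition \ref{free-gibbs-cpt-supp} in place of the classical estimates on $\mathcal{E}$. First I reduce to the case that $\mu$ has barycenter zero by a translation, after which $\mathcal{F}$ becomes translation invariant in $\rho$ (since $L$ is translation invariant and translating $\rho$ by a constant $c$ shifts $T(\rho,\mu)$ by $c\int y\,d\mu$), allowing me to restrict to centered minimizing sequences.

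Next I establish existence by the direct method. Using the one-dimensional monotone-coupling representation $T(\rho,\mu) = \int_0^1 F_\rho^{-1}(s) F_\mu^{-1}(s)\,ds$ together with the fact that for centered $\mu \neq \delta_0$ the quantile $F_\mu^{-1}$ is strictly sign-changing, one obtains a bound $T(\rho,\mu) \geq c(\mu)\int |x|\,d\rho$ with $c(\mu) > 0$ on centered $\rho$. Combined with the square-root lower bound $L(\rho) \geq -\sqrt{2\int|s|\,d\rho}$ from Lemma \ref{first-moment-bound}, this linear-vs-square-root competition forces $\int|x|\,d\rho_n$ to stay bounded along a minimizing sequence. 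Tightness and Prokhorov produce a subsequential weak limit $\hat\rho$. Weak lower semicontinuity of $\mathcal{F}$ on sequences of bounded first moment then follows from the weak lower semicontinuity of $L$ (Lemma \ref{first-moment-bound}) and an appropriate semicontinuity statement for $T$ obtained from the quantile representation and uniform integrability, identifying $\hat\rho$ as a minimizer.

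For uniqueness up to translation I combine Lemma \ref{L-disp-convex} with the one-dimensional fact that along Wasserstein geodesics $\rho_t$ the quantile functions interpolate linearly, so $T(\rho_t,\mu) = (1-t)T(\rho_0,\mu) + t T(\rho_1,\mu)$ is affine in $t$; consequently $\mathcal{F}(\rho_t)$ is strictly displacement convex unless $\rho_0$ and $\rho_1$ are translates, ruling out two non-translate minimizers. For the regularity of $\hat\rho$ I exploit the saddle-point reformulation $\mathcal{F}(\rho) = \min_u \mathcal{G}(\rho,u)$: fixing $\hat\rho$ and optimizing in $u$ reproduces the Brenier characterization, giving a convex $u$ with $(u')_\#\hat\rho = \mu$; fixing this $u$ and optimizing in $\rho$ identifies $\hat\rho$ with the free Gibbs measure $\nu_u$ by the variational characterization recalled just before Proposition \ref{free-gibbs-cpt-supp}. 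Since $\hat\rho$ is centered and $\mu \neq \delta_0$, $u'$ must change sign, forcing $u$ to grow at infinity, so Proposition \ref{free-gibbs-cpt-supp} applies and yields compact support, absolute continuity, and the identity $2\pi H\hat\rho = u'$ on $\supp(\hat\rho)$.

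Finally, (1)$\Rightarrow$(2) is exactly what the saddle-point analysis gives, and (2)$\Rightarrow$(1) follows by checking that any $\hat\rho$ satisfying $\hat\rho = \nu_u$ and $(u')_\#\hat\rho = \mu$ simultaneously attains both inner minima of $\mathcal{G}(\rho,u)$, hence is a joint minimizer, hence a minimizer of $\mathcal{F}$; centering then picks it out uniquely in view of the translation classification. The step I expect to be the most delicate is the coercivity combined with semicontinuity of $T$: because $L$ is a much weaker regularizer than the classical entropy, squeezing out a workable tightness bound from the competition between the linear-scale growth of $T$ and the slow decay of $L$, and then controlling the behavior of $T$ under weak limits, requires careful quantitative use of the one-dimensional quantile representation and of the hypothesis $\mu \neq \delta_0$.
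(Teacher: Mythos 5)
Your plan for existence, uniqueness up to translation, regularity, and the implication (1)$\Rightarrow$(2) tracks the paper's proof essentially step for step: coercivity from the bound $T(\rho,\mu)\ge c(\mu)\int|x|\,d\rho$ together with Lemma \ref{first-moment-bound}, tightness and passage to a weak limit, lower semicontinuity of $L$ and $T$, strict displacement convexity of $L$ off translates, and then the reformulation $\mathcal{F}(\rho)=\min_u\mathcal{G}(\rho,u)$ to extract a convex $u$ with $(u')_{\#}\hat\rho=\mu$ and $\hat\rho=\nu_u$, feeding into Proposition \ref{free-gibbs-cpt-supp}. Your observation that the quantile representation makes $t\mapsto T(\rho_t,\mu)$ affine along one-dimensional Wasserstein geodesics is correct and slightly cleaner than the paper's appeal to displacement convexity of $T$; both serve the same purpose in the uniqueness argument. (You are also right that the theorem implicitly needs $\mu$ to be centered, as the paper's own preamble indicates.)

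The direction (2)$\Rightarrow$(1), however, has a genuine gap. You argue that because $\hat\rho=\nu_u$ minimizes $\rho\mapsto\mathcal{G}(\rho,u)$ and $u$ minimizes $v\mapsto\mathcal{G}(\hat\rho,v)$, the pair $(\hat\rho,u)$ is a joint minimizer of $\mathcal{G}$, hence $\hat\rho$ minimizes $\mathcal{F}$. But a point that minimizes each coordinate separately need not be a joint minimizer of a two-variable functional, and the inequalities at hand point the wrong way: for arbitrary $\rho$ one has $\mathcal{F}(\rho)\le\mathcal{G}(\rho,u)$ (not $\ge$), so knowing $\mathcal{G}(\rho,u)\ge\mathcal{G}(\hat\rho,u)=\mathcal{F}(\hat\rho)$ does not yield $\mathcal{F}(\rho)\ge\mathcal{F}(\hat\rho)$ — the optimal potential for $\rho$ may well give $\mathcal{G}$ a strictly smaller value than $u$ does. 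The paper closes this direction by a different route: take the displacement geodesic $\rho_t=(f_t)_{\#}\hat\rho$ from $\hat\rho$ to an arbitrary $\rho$, use displacement convexity of $\mathcal{F}$ to reduce to showing $\frac{d}{dt}\big|_{t=0}\mathcal{F}(\rho_t)\ge 0$, compute $\frac{d}{dt}\big|_{t=0}L(\rho_t)=1-\int f(x)u'(x)\,d\hat\rho(x)$ using $2\pi H\hat\rho=u'$, bound the $T$-derivative below by $\int(f(x)-x)u'(x)\,d\hat\rho(x)$ via Santambrogio, and then invoke the one-variable Schwinger--Dyson identity $\int x u'(x)\,d\hat\rho(x)=1$ to conclude. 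You need an argument of this kind; the min-min exchange you invoke is not licensed by your hypotheses.
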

\begin{proof} 
First we'll show that $\mathcal{F}$ has a minimizer unique up to translation.

Let $\rho_{n}$ be a minimizing sequence of probability measures with finite first moment.  Note that without loss of generality we may assume that the $\rho_n$ are centered, as $\mathcal{F}$ is invariant under translation.

By \cite{santambrogio}, we have that $T(\rho_n,\mu) \ge c \int |x| \,d\rho_n(x)$ for some $c > 0$ depending only on $\mu$, since $\mu$ is not supported on a hyperplane, which here means $\mu \neq \delta_0$.  Applying Lemma \ref{first-moment-bound}, we have $L(\rho_n) \ge - \sqrt{2\int |x| \,d\rho_n(x)}$.  Combining these yields a uniform bound on the first moment of the $\rho_n$, which implies the sequence is tight.  By passing to a subsequence, we can assume that $\rho_n \rightharpoonup \hat{\rho}$ weakly for some probability measure $\hat{\rho}$.  Note also that $\hat{\rho} \in \mathcal{P}_1$, the space of probability measures with finite first moment.  This is because integration against $|x|$, a lower semi-continuous function bounded from below, is a weakly lower semi-continuous functional.  

By weak convergence of $\rho_n \rightharpoonup \hat{\rho}$ and a uniform bound on the first moments, Lemma \ref{first-moment-bound} gives us that $L(\hat{\rho}) \le \liminf_{n \to \infty} L(\rho_n)$.  As we know that $T(\rho,\mu)$ is weakly lower semi-continuous in $\rho$ by \cite{santambrogio}, we have that $\hat{\rho}$ is a minimizer of $\mathcal{F}$.

We know that $\hat{\rho}$ has finite first moment, but we need to show now that it has finite second moment as well.  As part of showing this, we'll see that it must satisfy $\hat{\rho} = \nu_u$ for some convex $u$ with $(u')_{\#} \hat{\rho} = \mu$, so we'll have (1) implies (2).  Afterwards we will show uniqueness of the minimizer of $\mathcal{F}$ and then prove (2) implies (1).

Take $u$ to be a convex lower semi-continuous function which realizes the dual formulation of $T(\hat{\rho},\mu)$, that is, $T(\hat{\rho},\mu) = \int u \,d\hat{\rho} + \int u^* \,d\mu$.  Additionally, we know that $(u')_{\#} \hat{\rho} = \mu$ \cite{santambrogio}.

Simplifying $\mathcal{F}$ using $u$ now yields
\begin{align*}
    \mathcal{F}(\hat{\rho}) = \iint - \log |s-t| \,d\hat{\rho}(s) \,d\hat{\rho}(t) + \int u \,d\hat{\rho} + \int u^* \,d\mu
\end{align*}

We consider a new functional
\begin{align*}
    \mathcal{G}(\rho) = \iint - \log |s-t| \,d\rho(s) \,d\rho(t) + \int u \,d\rho + \int u^* \,d\mu
\end{align*}
and observe that since the first term is $L(\rho)$ latter two terms are larger than $T(\rho,\mu)$, we must have $\mathcal{G}(\rho) \ge \mathcal{F}(\hat{\rho})$.  Therefore $\hat{\rho}$ minimizes $\mathcal{G}$.  

However, the final term does not depend on the measure, so $\mathcal{K}(\rho) = L(\rho) + \int u \,d\rho$ is still minimized at $\hat{\rho}$.  Thus $\hat{\rho} = \nu_u$ by definition of $\nu_u$.  And as $\hat{\rho}$ has finite first moment, Proposition \ref{free-gibbs-cpt-supp} implies that $\hat{\rho}$ has compact support, and thus all its moments are finite and in particular $\hat{\rho} \in \mathcal{P}_2$.  We also get that $2 \pi H \hat{\rho} = u'$ on the support of $\hat{\rho}$.

Thus we now have that $\mathcal{F}$ has a minimizer with finite second moment, and (1) implies (2).  Let's now show that the minimizer to $\mathcal{F}$ is unique.

To show uniqueness up to translation, and thus uniqueness of a centered minimizer, we invoke the displacement convexity of both $L$ using Lemma \ref{L-disp-convex} and $T$ using \cite[Prop.\ 3.3]{santambrogio}.  Combining these will give displacement convexity of $\mathcal{F}$.  Note that displacement convexity of $T$ in \cite[Prop.\ 3.3]{santambrogio} is shown between two measures which are absolutely continuous with respect to Lebesgue measure, but the result holds just as well with no modifications when the initial measure is non-atomic and thus optimal transport maps from it still exist in the space $\mathbb{W}_2$.

Furthermore, by Lemma \ref{L-disp-convex}, we have strict displacement convexity of $L$ except between translates.  In particular, if $\rho_0$ and $\rho_1$ are minimizers and not translates of each other, then on the geodesic between them, there is some $\rho_t$ with a strictly smaller value of $L$ and a value of $T$ no larger than that of $\rho_0$ or $\rho_1$.  This is a contradiction, so any two minimizers of $\mathcal{F}$ must be translates of each other.

Finally, let's show (2) implies (1).  Let $\hat{\rho}$ satisfy $\hat{\rho} = \nu_u$ with $u$ convex and $(u')_{\#} \hat{\rho} = \mu$.  We intend to show that $\hat{\rho}$ is a minimizer of $\mathcal{F}(\rho)$, where we note that uniqueness up to translation is already guaranteed.  Also by the functional that defines $\nu_u$ not being $+\infty$, we know that $\hat{\rho}$ is non-atomic.

With $\hat{\rho}$ as above, let $\rho$ be another probability measure with finite second moment, and $f$ be the transport map between $\hat{\rho}$ and $\rho$ and let $\rho_t = (f_t)_{\#} \hat{\rho}$ where $f_t = (1-t)I + tf$.

The map $t \mapsto \mathcal{F}(\rho_t)$ is convex, so it is enough to show that its derivative at zero is non-negative.  We will compute the derivative of the log-energy term and borrow Santambrogio's calculation for $T$, which we observe does not require absolute continuity but only the existence of an optimal transport map \cite[Prop.\ 3.3]{santambrogio}.  We calculate
\begin{align*}
    \frac{d}{dt}\Big|_{t=0} L(\rho_t) &= \frac{d}{dt}\Big|_{t=0} \iint - \log |x-y| \,d\rho_t(x) \,d\rho_t(y)
    \\
    &= \iint - \frac{d}{dt}\Big|_{t=0} \log |tf(x) + x - tx - t f(y) - y + ty| \,d\hat{\rho}(x) \,d\hat{\rho}(y)
    \\
    &= - \iint \frac{f(x) - x - (f(y) - y)}{x-y} \,d\hat{\rho}(x) \,d\hat{\rho}(y)
    \\
    &= - \iint 2\frac{f(x)}{x-y} - 1 \,d\hat{\rho}(x) \,d\hat{\rho}(y)
    \\
    &= 1 - 2 \pi \int f(x) H \hat{\rho}(x) \,d\hat{\rho}(x)
    \\
    &= 1 - \int f(x) u'(x) \,d\hat{\rho}(x).
\end{align*}
The last line follows by recalling $2 \pi H \hat{\rho} = u'$ on $\supp \hat{\rho}$.

Note that for the $T$ term, we have that $\frac{d}{dt}\big|_{t=0} T(\rho_t,\mu)$ is bounded below by $\int (f(x) - x) u'(x) \,d\hat{\rho}(x)$ \cite[Prop. 3.3]{santambrogio}.  Thus combining these two terms, we find that
\begin{align*}
    \frac{d}{dt} \Big|_{t=0} \mathcal{F}(\rho_t) \ge 1 - \int x u'(x) \,d\hat{\rho}(x) \ge 0
\end{align*}
where the final inequality follows immediately from Schwinger-Dyson for $\hat{\rho} = \nu_u$ (in particular, $\tau(x u') = \tau \otimes \tau(1)$, which is an application of $2 \pi H \hat{\rho} = u'$ on the support of $\hat{\rho}$).  Thus, using the convexity of the functional and noting that the above holds for any $\rho$, we see that $\hat{\rho}$ minimizes $\mathcal{F}$. 
\end{proof}

\subsection{Examples}

We include some examples of free moment measures.

\subsubsection{Quadratic potential}

The semicircular distribution $\mu$ equals $\nu_{\tfrac{1}{2} x^2}$, so $\mu$ is a free moment measure with potential $u(x) = \tfrac{1}{2} x^2$, just as the Gaussian is a (classical) moment measure with quadratic potential. This is not surprising, as the semicircle law plays an analogous role in free probability to the Gaussian law in classical probability.

\subsubsection{Two point masses}

The next simplest example is $\mu = \tfrac{1}{2} \delta_{-1} + \tfrac{1}{2} \delta_1$ which has the potential $u(x) = \tfrac{1}{2} |x|$, since $\nu_u$ is necessarily centered when $u$ is even, and thus $(u')_{\#} \nu_u = \mu$. In this particular case, the corresponding measure is $\nu_{u}(x)=\frac{1}{\pi}\log\left|\frac{1+\sqrt{1-x^{2}}}{x}\right|$ (supported on $[-1,1]$).

\subsubsection{Quartic potential}

Given the potential $x^{4}/4$, we calculate the free Gibbs measure to be \[\nu_{u}(x)=\frac{r^{3}}{4\pi}(2x^{2}+1)\sqrt{1-\left(\frac{x}{r}\right)^{2}}dx\]
where 
\[r=\frac{2}{\sqrt[4]{3}}\]is the radius of the support. When we then push this forward by $u'=x^{3}$, we get 
\[\mu(x)=\frac{3r^{3}}{4\pi}(2+x^{-2/3})\sqrt{1-\frac{x^{2/3}}{r^{2}}}dx\] 
Thus $\mu$ is a free moment measure with potential $x^4/4$.

\subsubsection{Translation and scaling}

Note that translations $u(x+c)+d$ of a potential yield the same free moment measure as $u$ does.

Suppose $\mu$ has potential $u$ such that $(u')_{\#} \nu_u = \mu$.  Let's consider $u(x/c)$ for $c > 0$.  We'd like to find the corresponding free moment measure.  First, let's find the free Gibbs measure.

If $f(x)$ is the density for an optimizer for $\mathcal{F}_{u}(\rho)$, then $c f(cx)$ is the density for the optimizer of $\mathcal{F}_{u(cx)}(\rho)$, and vice-versa.  To see this we change variables
\begin{align*}
    \iint - \log |s-t| & c f(cs) cf(ct) \,d s \,dt + \int u(ct) c f(ct) \,dt 
    \\
    &= \iint - (\log |x-y| - \log c) f(x) f(y) \,dx\,dy+ \int u(x) f(x) \,dx
    \\
    &= \mathcal{F}_u(f(x) \,dx) + \text{constant}
\end{align*}
and note that the constant $\log c$ is irrelevant to maximization or minimization.  This tells us that if $u$ is replaced with $u(cx)$, the corresponding free Gibbs measure $\nu_u = f(x) \,dx$ is replaced with $c f(cx) \,dx$.

As a consequence, for $v(x) = u(cx)$, we have that for any $g$
\begin{align*}
    \int g(x) \,d \left(v'_{\#} \nu_v\right) &=  \int g(c u'(cx)) cf(cx) \,dx
    \\
    &= \int g(c u'(t)) \,d\nu_u(t)
\end{align*}
and so $(v')_{\#} \nu_v = c_{\#} \left((u')_{\#} \nu_u\right)$. Thus the new measure is a dilated copy of the old measure, scaled by a factor of $c$.

\section{Multivariable Case}

Instead of generalizing the variational argument, we will be applying the methods of Shlyakhtenko and Guionnet in \cite{dima}.  These methods will allow us to deal with free Gibbs laws which are near the free semicircular law (which is the free Gibbs law for the potential $\tfrac{1}{2}(X_{1}^{2}+...+X_{n}^{2})$). In order to state out main theorem, we recall the norms $||\cdot ||_{A}$ defined on nc power series as

\[\Big|\Big|\sum_{I} a_{I}X_{I}\Big|\Big|_{A}=\sum_{I}|a_{I}|A^{|I|}\]
where I ranges over multi-indices, and $|I|$ is the length of $I$ (see \cite{GMS2}).

\begin{theorem} \label{even-condition}
There exist a $C$ and an $\epsilon$ such that, if $W(X_{1},...,X_{n})$ is a self adjoint nc power series containing only terms of even degree, and $||W||_{C}<\epsilon$, then there is a corresponding power series $V(Y_{1},...,Y_{n})$ such that, when $Y$ has the free Gibbs law associated to $\tfrac{1}{2}|Y|^2+V$, then $Y+\mathcal{D}_{Y}V(Y)$ has the free Gibbs law associated to $\tfrac{1}{2}|X|^{2}+W$. 
\end{theorem}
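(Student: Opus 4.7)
The plan is to mirror the transport-map/contraction-mapping framework that Guionnet and Shlyakhtenko use in \cite{dima} to compare free Gibbs laws near the semicircular regime. Given a candidate even self-adjoint nc power series $V(Y_1,\dots,Y_n)$, set $F(Y) = Y + \mathcal{D}V(Y) = \mathcal{D}\bigl(\tfrac{1}{2}|Y|^2 + V\bigr)(Y)$, and suppose $Y$ has the free Gibbs law $\tau_{\frac{1}{2}|Y|^2+V}$ (which exists for $V$ small in $\|\cdot\|_C$ by \cite{GMS}). The goal is to arrange $X := F(Y)$ to have law $\tau_{\frac{1}{2}|X|^2+W}$. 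In the perturbative regime of \cite{GMS}, both laws are uniquely characterized by their Schwinger--Dyson equations, so it is enough to arrange that the conjugate variable of $X_i$ computed in $W^*(Y,\tau)$ equals $F_i(Y) + \mathcal{D}_iW(F(Y))$.

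First, I would write this requirement explicitly. Using the Schwinger--Dyson equation for $Y$ (which identifies the conjugate variables of $Y$ as $Y_i + \mathcal{D}_iV(Y)$) together with Voiculescu's chain rule for the difference quotients $\partial$ under the change of variable $X = F(Y)$, one obtains a functional identity $\Psi(V) = W$, where $\Psi$ is an analytic map on a neighborhood of $0$ in the space of even self-adjoint nc power series with $\Psi(0) = 0$. The content of the theorem is to invert $\Psi$ near $0$.

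Next, I would compute the linearization $D\Psi(0)$ by perturbing $V = \varepsilon V_0$ around the free semicircular point. One expects $D\Psi(0)$ to be a graded linear operator on nc power series whose homogeneous pieces act as scalar multiples of the identity coming from a free Ornstein--Uhlenbeck-type number operator, and in particular to be a bounded isomorphism when restricted to the subspace of even nc power series with no constant or linear terms. This is precisely where the evenness hypothesis on $W$ and the natural ansatz that $V$ be even become essential: they remove the problematic low-degree kernel of $D\Psi(0)$ and preserve the centering $\tau(X_i) = 0$ forced by the Gaussian-type quadratic. One then rewrites $\Psi(V) = W$ as the fixed-point equation $V = \Phi_W(V) := D\Psi(0)^{-1}\bigl(W - R(V)\bigr)$, where $R(V) := \Psi(V) - D\Psi(0)V$ is the at-least-quadratic nonlinear remainder.

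The final step is the Banach contraction argument: standard estimates on product, composition, and cyclic gradient of nc power series in the $\|\cdot\|_A$-norms (as in \cite{GMS,GMS2}) yield a Lipschitz-type bound $\|R(V_1) - R(V_2)\|_C \le K\bigl(\|V_1\|_C + \|V_2\|_C\bigr)\|V_1 - V_2\|_C$ on a small ball, with $K$ depending on $C$ but not on $W$. Choosing $C$ appropriately and then $\varepsilon$ small, $\Phi_W$ sends some closed ball $\{V : \|V\|_C \le \delta\}$ into itself and is a contraction there, yielding the desired $V$. I expect the main technical obstacle to be quantitative control of norms across the compositional chain rule---in particular, showing that $W \circ F(Y)$ and $\mathcal{D}W(F(Y))$ are well-defined and bounded in $\|\cdot\|_C$ when $Y$ is a perturbatively semicircular family---which typically forces working with a nested family of $\|\cdot\|_A$-norms (shrinking $A$ at each compositional step), a technicality that the near-semicircular and even-perturbation setting fortunately accommodates.
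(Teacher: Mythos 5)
Your plan is essentially the paper's proof: both characterize the laws by Schwinger--Dyson equations, push conjugate variables through $Y\mapsto Y+\mathcal{D}V(Y)$ via the chain rules from \cite{dima}, identify the linearization at $V=0$ as the number operator $\mathcal{N}$ (so that evenness is exactly what removes the degree-one subspace where $\mathcal{N}^{-1}$ fails to contract), and close the argument by Banach fixed point in $\|\cdot\|_A$-balls using the composition and log estimates from \cite{GMS,GS}. The one step your plan glosses over is the ``integration'' that converts the resulting $n$-tuple conjugate-variable identity into a single scalar equation for the potential $V$ (and the accompanying check that the fixed-point map preserves evenness); this is the content of the paper's Lemma \ref{step1-lemma-of-even-condition}, which relies on the cyclic-gradient identity $\tfrac{1}{m+1}\mathcal{D}\bigl[(\tau\otimes1+1\otimes\tau)\Tr(Jf^{m+1})\bigr]=-J^*(Jf^{m+1})+JfJ^*(Jf^m)$ from \cite{dima} to recognize all the $J^*$-terms as cyclic gradients.
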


This is precisely the condition that the free Gibbs law for $\tfrac{1}{2}|Y|^{2}+V(Y)$ pushes forward to that of $\tfrac{1}{2}X^{2}+W(X)$ along $\mathcal{D}(\tfrac{1}{2}|Y|^{2}+V(Y))$. 

%The assumption of real coefficients is necessary to ensure that $W$ is self adjoint, which is necessary for the existence of the free Gibbs law. 

In fact, we must take this opportunity to elaborate on the existence of free Gibbs laws. In this perturbative regime, we cannot rely on convexity to ensure the existence of solutions to Schwinger-Dyson, no matter how small the perturbation. Indeed, consider the single variable case and $W=\epsilon X^{3}$.  The functional to minimize in $\tau$ is $\chi(\tau) + \tau(X^2 + W)$.  The value can be reduced by taking any measure which has finite free entropy and translating it left, reducing $\int W$. Since there is no limit to how far we can translate it, and since this effect will eventually overpower the increase in $\int X^{2}$, we find that there can be no minimum. Instead, we must artificially institute a cutoff, requiring that the norm of our random variable is less than $T>2$. Specifically, we invoke a slight modification of (\cite{GMS}):

\begin{proposition} For each cutoff $T>2$, we have that there is an $R>0$ such that $||W||_{T}<R$ implies that there exists a unique solution, $\tau$, to the bounded Schwinger-Dyson equation
\begin{align*}
\tau(P\cdot (X+\mathcal{D}W(X)))=\tau\otimes\tau\times Tr(JP)
\\
|\tau(X_{i_{1}},...,X_{i_{k}})|\leq T^{k}
\end{align*}
\end{proposition}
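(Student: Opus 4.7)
The plan is to adapt the fixed-point argument of \cite{GMS} to the bounded-moment class
\[
\Sigma_T = \{\tau : \tau \text{ is a tracial state with } |\tau(X_{i_1}\cdots X_{i_k})| \le T^k\},
\]
exploiting the strict gap $T-2 > 0$ between the cutoff and the radius of the free semicircular law $\tau_0$, whose moments are bounded by $2^k$ and which is the unique solution when $W=0$. Isolating the semicircular part rewrites the Schwinger--Dyson equation as
\[
\tau(P \cdot X) = \tau \otimes \tau \otimes \Tr(JP) - \tau(P \cdot \mathcal{D}W(X)),
\]
exhibiting the problem as a small perturbation of the equation characterizing $\tau_0$.

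Next I would specialize the test polynomial to $P_j = X_{i_1}\cdots X_{i_{k-1}}$, with the other slots zero, to extract the explicit recursion
\[
\tau(X_{i_1}\cdots X_{i_{k-1}} X_j) = \sum_{l : i_l = j} \tau(X_{i_1}\cdots X_{i_{l-1}})\,\tau(X_{i_{l+1}}\cdots X_{i_{k-1}}) - \tau(X_{i_1}\cdots X_{i_{k-1}} \mathcal{D}_{x_j}W(X)).
\]
Viewing a tracial state $\tau$ as a continuous linear functional on the $\|\cdot\|_T$-completion of $\mathbb{C}\langle X_1,\dots,X_n\rangle$, this recursion prescribes an operator $\Phi$ on linear functionals whose fixed points inside $\Sigma_T$ are exactly the bounded Schwinger--Dyson solutions. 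Traciality, unitality, and self-adjointness are preserved by $\Phi$ because $W$ is self-adjoint and $\mathcal{D}$ is cyclic; positivity of the final fixed point is inherited by weak-$*$ limits of iterates starting from the positive state $\tau_0$. Using $|\tau(Q)| \le \|Q\|_T$ on $\Sigma_T$ together with $\|\mathcal{D}_{x_j}W\|_T \le C_T\|W\|_T$, the perturbation term is bounded by $C_T\|W\|_T \cdot T^{k-1}$; combined with the Catalan-style bound giving $2^k$ for the pure semicircular contribution, choosing $R$ small relative to $T-2$ guarantees $\Phi(\Sigma_T) \subset \Sigma_T$.

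Applying the same estimates to the difference $\Phi(\tau)-\Phi(\tau')$ in the weighted metric $d(\tau,\tau') = \sup_I T^{-|I|}|\tau(X_I)-\tau'(X_I)|$ yields a contraction whose Lipschitz constant combines a factor $2/T < 1$ from the semicircular recursion with a factor of order $\|W\|_T/(T-2)$ from the perturbation; shrinking $R$ forces this below $1$, and Banach's theorem produces the unique fixed point in $\Sigma_T$. The main obstacle is that $\mathcal{D}W(X)$ couples moments of all degrees simultaneously, so one cannot run the recursion by naive induction on $k$; instead, the weighted metric must be chosen so that the infinite sums defining $\|W\|_T$ are absorbed by the weights $T^{-|I|}$. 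The gap $T > 2$ is precisely what makes this work, providing the geometric factor $(2/T)^{|I|}$ needed to dominate the combinatorial growth coming from the semicircular recursion. This is the essence of the ``slight modification'' alluded to in the statement: the contraction scheme of \cite{GMS} transplants onto the closed set $\Sigma_T$ once one verifies that the perturbation estimates respect the cutoff.
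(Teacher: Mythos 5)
The paper does not actually supply a proof of this proposition; it is invoked as ``a slight modification of \cite{GMS}.'' Your overall strategy (isolate the semicircular part, extract the moment recursion, iterate on $\Sigma_T$, and close with Banach) is the right one and matches the fixed-point scheme from the literature, so the skeleton is sound. However, there is a concrete gap in the estimates, and it is exactly where the ``slight modification'' lives.

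The claimed contraction factor $2/T$ for the semicircular part is wrong under the metric you chose, $d(\tau,\tau') = \sup_I T^{-|I|}\lvert\tau(X_I)-\tau'(X_I)\rvert$. Fix a monomial $X_{i_1}\cdots X_{i_{k-1}}X_j$ of degree $k$. Writing $a_l = X_{i_1}\cdots X_{i_{l-1}}$, $b_l = X_{i_{l+1}}\cdots X_{i_{k-1}}$, the semicircular contribution to $\Phi\tau-\Phi\tau'$ is $\sum_{l:i_l=j}\bigl(\tau(a_l)\tau(b_l)-\tau'(a_l)\tau'(b_l)\bigr)$, and the product rule plus $\lvert\tau(a_l)\rvert\le T^{|a_l|}$, $\lvert b_l\rvert + \lvert a_l\rvert = k-2$ gives each summand a bound $2\,d(\tau,\tau')\,T^{k-2}$. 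The sum has up to $k-1$ terms (all $i_l=j$, e.g.\ for $X_j^k$), so after dividing by $T^k$ you get a Lipschitz factor of order $2(k-1)/T^{2}$, which grows without bound in $k$. The same issue breaks the invariance $\Phi(\Sigma_T)\subset\Sigma_T$: $\lvert\Phi\tau(X_j^k)\rvert$ is bounded only by $(k-1)T^{k-2}$ plus the perturbation, and $(k-1)T^{k-2}>T^{k}$ as soon as $k>T^{2}+1$. Your appeal to ``the geometric factor $(2/T)^{|I|}$'' conflates the moment bound $2^{k}$ of the semicircular law $\tau_0$ with the bound $T^{k}$ that all $\tau\in\Sigma_T$ actually satisfy; the contraction estimate must use the latter, and then no such geometric factor appears.

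The fix is to use a strictly sub-geometric weight, e.g.\ $d_\xi(\tau,\tau')=\sup_I\xi^{|I|}\lvert\tau(X_I)-\tau'(X_I)\rvert$ (or an $\ell^1$-type version) with $\xi T<1$, which turns $\sum_{l}T^{|a_l|}/\xi^{|b_l|}$ into a convergent geometric series $\xi^2\sum_{m\ge 0}(\xi T)^m$ rather than a linear-in-$k$ factor; equivalently, one reformulates the recursion as a quadratic equation for the moment generating function and shows it has a unique analytic solution in a disk of radius $<1/T$. Either route also requires a separate argument that the resulting fixed point, though found in a larger weighted space, actually satisfies the cutoff $\lvert\tau(X_I)\rvert\le T^{|I|}$. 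A smaller gap: you assert that traciality is preserved by $\Phi$, but because $\mathcal{D}W(X)$ makes the recursion depend on the choice of final index $j$, you need to verify that the different choices of $j$ yield consistent values, which uses cyclic symmetry of $W$ and is not automatic. Both of these refinements are precisely the content of the ``slight modification'' of \cite{GMS} that the paper leaves to the reader, and your sketch as written does not supply them.
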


We will split the proof of Theorem \ref{even-condition} into two main steps\textemdash deriving a differential equation for $V$ in which all terms are cyclic derivatives, and then "integrating" that equation to find a map to which we can apply the contraction mapping theorem to find a solution. Following the proof, we will compare the restrictions in this result to those in the commutative case and discuss potential directions for extension. 

The first step is to rephrase the Schwinger-Dyson equation from an integral equation to a differential equation. To do so, it will be useful to define inner products associated to $\tau$:

\begin{align*}
    \langle a,b \rangle_{M} = \tau(a^* b)
\end{align*}
\begin{align*}
    \langle a \otimes b, c \otimes d \rangle_{ M \otimes  M^{op}}
        = \tau(a^* c) \tau(b^* d)
        = \tau \otimes\tau ((a\otimes b)^* c \otimes d)
\end{align*}
\begin{align*}
    \langle A,B \rangle_{M_{n} (M\otimes M^{op})}
        = \tau \otimes \tau \left(\Tr(A^{*}B)\right)   
\end{align*}

We will omit the subscripts if the ambient space can be inferred. Thus the Schwinger-Dyson equation can be written as 
\[\langle\mathcal{D}U, P\rangle =\langle 1,JP\rangle,\ i.e. \]
\[\langle \mathcal{D}U, P\rangle = \langle J^{*}(1),P\rangle,\ i.e.\]
\[\Rightarrow \mathcal{D}U=J^{*}(1)\]

We will also need some additional operators on nc power series, $\mathcal{S}$, $\mathcal{N}$, $\Sigma$ (the inverse of $\mathcal{N}$), and $\Pi$. These are linear operators on power series in $Y$, which act on monomials as follows. The cyclic symmetrization operator, $S$, is given by
\[\mathcal{S}(x_{i_{1}}...x_{i_{n}})=\frac{1}{n}\sum_{j=1}^{n}x_{i_{j}}...x_{i_{n}}x_{i_{1}}x_{i_{j-1}},\]
on constant terms it acts as the identity. The number operator $N$ is given by 
\[\mathcal{N}(x_{i_{1}}...x_{i_{n}})=nx_{i_{1}}...x_{i_{n}},\]
Finally,
\[\Sigma(x_{i_{1}}...x_{i_{n}})=\frac{x_{i_{1}}...x_{i_{n}}}{n},\]
is defined on power series with no constant term and is the inverse of $\mathcal{N}$ on that space. $\Pi$ is the projection onto power series with no constant term.

With these operators defined, we may state the following lemma. 
\begin{lemma} \label{step1-lemma-of-even-condition}
$V$ satisfies the conclusion of Theorem 3.1 if and only if 
\begin{align*}
\mathcal{S}\Pi \Big[ W(Y+\mathcal{D}V) &+ (\mathcal{N}-1)V + \frac{|\mathcal{D}V|^{2}}{2} 
\\
&- (1\otimes\tau+\tau\otimes 1)\Tr(\log(1+J\mathcal{D}V))\Big]=0
\end{align*}
\end{lemma}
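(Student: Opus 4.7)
The plan is to recast the Schwinger--Dyson equations for $Y$ (hypothesis) and for $X=Y+\mathcal{D}V$ (conclusion) in cyclic-gradient form $\mathcal{D}U=J^{*}(1)$, deduce from them a vector equation $\mathcal{D}_{Y_{i}}\Phi(Y)=0$ whose integrated form $\mathcal{S}\Pi\Phi=0$ is the claim of the lemma, and exploit the fact that $\mathcal{S}\Pi$ left-inverts $\mathcal{D}$ on power series modulo constants to make the equivalence reversible.

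By hypothesis $Y$ has law $\tau_{U_{Y}}$ with $U_{Y}=\tfrac{1}{2}|Y|^{2}+V$, so $J_{Y}^{*}(1)=Y+\mathcal{D}V$. The conclusion of Theorem~\ref{even-condition} demands that $X=Y+\mathcal{D}V$ satisfy Schwinger--Dyson for $U_{X}=\tfrac{1}{2}|X|^{2}+W$, i.e.\ $J_{X}^{*}(1)=X+\mathcal{D}_{X}W(X)$. Substituting $X=Y+\mathcal{D}V$ and using $Y+\mathcal{D}V=J_{Y}^{*}(1)$, this becomes $J_{X}^{*}(1)-J_{Y}^{*}(1)=\mathcal{D}_{X}W(X)$ as an identity in $Y$. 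The crucial change of variables uses the free chain rule $\partial_{Y_{k}}X_{j}=\delta_{jk}\cdot 1\otimes 1+\partial_{Y_{k}}\mathcal{D}_{Y_{j}}V$, which factorizes Jacobians of composed polynomials as $J_{Y}(P(X))=(J_{X}P)(X)\cdot(1+J\mathcal{D}V)$ in $M_{n}(M\otimes M^{op})$. Combined with the duality $\langle J^{*}A,Q\rangle_{M}=\tau\otimes\tau\,\Tr(A^{*}JQ)$ and the trace property of $\tau\otimes\tau$ on $M\otimes M^{op}$, this converts the Schwinger--Dyson condition into the vector equation
\[\mathcal{D}_{Y_{i}}\bigl[\tfrac{1}{2}|X|^{2}-\tfrac{1}{2}|Y|^{2}+W(X)-V-(1\otimes\tau+\tau\otimes 1)\Tr\log(1+J\mathcal{D}V)\bigr]=0\]
for every $i$. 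The log-trace term is precisely the contribution of the Jacobian factor $(1+J\mathcal{D}V)$ to $J_{X}^{*}(1)-J_{Y}^{*}(1)$; the partial traces $1\otimes\tau$ and $\tau\otimes 1$ appear because one leg of each $M\otimes M^{op}$ factor is contracted to produce the scalar log-series while the other is left free for $\mathcal{D}_{Y}$ to act on.

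Next, I would integrate this vector equation to a scalar one by applying $\mathcal{S}\Pi$. The identities $\sum_{i}Y_{i}\mathcal{D}_{Y_{i}}=\mathcal{N}\mathcal{S}$ and $\mathcal{S}^{2}=\mathcal{S}$ give $\mathcal{S}\Pi(\sum_{i}Y_{i}\mathcal{D}_{Y_{i}}V)=\mathcal{N}\mathcal{S}V=\mathcal{S}\Pi(\mathcal{N}V)$; expanding $\tfrac{1}{2}|X|^{2}-\tfrac{1}{2}|Y|^{2}=\tfrac{1}{2}\sum_{i}(Y_{i}\mathcal{D}_{Y_{i}}V+\mathcal{D}_{Y_{i}}V\,Y_{i})+\tfrac{1}{2}|\mathcal{D}V|^{2}$ then yields, after $\mathcal{S}\Pi$, the combination $\mathcal{S}\Pi[\mathcal{N}V+\tfrac{1}{2}|\mathcal{D}V|^{2}]$. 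Subtracting the $\mathcal{S}\Pi V$ contribution produces $\mathcal{S}\Pi[(\mathcal{N}-1)V+\tfrac{1}{2}|\mathcal{D}V|^{2}]$, and together with $\mathcal{S}\Pi W(Y+\mathcal{D}V)$ and the negative log-trace term this is exactly the bracketed expression in the lemma. The reverse implication runs the same chain of equivalences backwards: $\mathcal{S}\Pi\Phi=0$ forces $\mathcal{D}_{Y_{i}}\Phi=0$ modulo a cyclically-closed ambiguity already accounted for, and the chain-rule change of variables is reversible.

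The main obstacle is the change-of-variables step: carefully tracking the bimodule structure of $M\otimes M^{op}$ and the matrix multiplication in $M_{n}(M\otimes M^{op})$ to produce the precise series $\log(1+J\mathcal{D}V)$ with its $(1\otimes\tau+\tau\otimes 1)\Tr$ prefactor out of $J_{X}^{*}(1)-J_{Y}^{*}(1)$. Once this identity is nailed down, what remains is algebraic bookkeeping with $\mathcal{S}$, $\mathcal{N}$, $\Pi$, $\Sigma$, $\mathcal{D}$, and $J$ that follows directly from their definitions.
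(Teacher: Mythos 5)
Your proposal follows essentially the same route as the paper: rewrite Schwinger--Dyson for $Y$ and $X$ in the form $\mathcal{D}U = J^*(1)$, change variables via the Jacobian and cyclic-gradient chain rules, express the result as $\mathcal{D}[\,\cdot\,] = 0$, and then "integrate" with $\mathcal{S}\Pi$, using $\mathcal{D} = \mathcal{D}\circ\mathcal{S}\Pi$ to make the equivalence reversible. Your bracketed expression $\tfrac12|X|^2 - \tfrac12|Y|^2 + W(X) - V - (1\otimes\tau + \tau\otimes 1)\Tr\log(1 + J\mathcal{D}V)$ is the same as the paper's after expanding $|X|^2 = |Y+\mathcal{D}V|^2$ and using $\mathcal{S}\Pi(\sum_i Y_i\mathcal{D}_i V) = \mathcal{S}\Pi(\mathcal{N}V)$; your $\mathcal{N}$-bookkeeping on that side is correct.

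The one genuine gap is the step you yourself flag as "the main obstacle." You assert that the difference $J_X^*(1) - J_Y^*(1)$, after multiplying through by the Jacobian factor, produces exactly the total cyclic derivative of $(1\otimes\tau + \tau\otimes 1)\Tr\log(1 + J\mathcal{D}V)$, but no computation backs this up. The paper handles this with two ingredients you omit: first, the Neumann-series expansion $J^*\bigl(\tfrac{1}{1 + J\mathcal{D}V}\bigr) = J^*(1) - J^*\bigl(\tfrac{J\mathcal{D}V}{1+J\mathcal{D}V}\bigr)$ followed by multiplying by $(1 + J\mathcal{D}V)$ to isolate $\mathcal{D}W(Y+\mathcal{D}V) = -(1+J\mathcal{D}V)\,J^*\bigl(\tfrac{J\mathcal{D}V}{1+J\mathcal{D}V}\bigr)$; and second, the telescoping identity from \cite[Lemma 3.4]{dima},
\[
\frac{1}{m+1}\mathcal{D}\Bigl[(\tau\otimes 1 + 1\otimes\tau)\Tr\bigl(Jf^{m+1}\bigr)\Bigr] = -J^*\bigl(Jf^{m+1}\bigr) + Jf\,J^*\bigl(Jf^{m}\bigr),
\]
which, summed over $m$, is precisely what turns the Neumann series into $\mathcal{D}$ of the log-trace term, plus the residual $-J\mathcal{D}V \cdot J^*(1)$ that supplies your $J\mathcal{D}V\cdot Y + J\mathcal{D}V\cdot\mathcal{D}V$ terms. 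Without citing or reproving this identity, the claim that the log-trace term "is precisely the contribution of the Jacobian factor" is an unjustified leap. Your heuristic about one leg being contracted by $\tau$ while the other stays free is a reasonable intuition for why a $(1\otimes\tau + \tau\otimes 1)\Tr$ prefactor should appear, but it does not establish that $\mathcal{D}$ of the log-trace is exactly the right object; that requires the telescoping lemma. Once that lemma is in hand, the rest of your argument goes through.
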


\begin{proof}
Our aim is to express the Schwinger-Dyson equation of the pushforward as a single cyclic derivative. For the purpose of keeping our derivatives clear, we will define the variable $X=Y+\mathcal{D}V(Y)$. We then have that 
\begin{equation}
Y+\mathcal{D}_{Y}V(Y)=J_{Y}^{*}(1) \label{eq:Y}
\end{equation}
and want to understand what condition on $V$ ensures Schwinger-Dyson for $X$, i.e.
\begin{equation}
X+\mathcal{D}_{X}W(X)=J_{X}^{*}(1) \label{eq:X}.
\end{equation}

Substituting the definition of $X$ into (\ref{eq:X}) gives
\begin{align*}
Y + \mathcal{D}_{Y} V(Y) + \mathcal{D}_{X} W(Y + \mathcal{D}_{Y} V(Y)) = J^*_X (1),
\end{align*}
to which we apply the chain rule found in \cite[Lemma 3.1]{dima},
\begin{align*}
    J^{*}_{X}(1)=J^{*}_{Y}\left(\frac{1}{1+J_{Y}\mathcal{D}_{Y}V(Y)}\right),
\end{align*}
to arrive at the equation
\begin{equation} \label{intermediate-equation}
Y+\mathcal{D}_{Y}V(Y)+\mathcal{D}_{X}W(Y+\mathcal{D}_{Y}V(Y))=J^{*}_{Y}\left(\frac{1}{1+J_{Y}\mathcal{D}_{Y}V(Y)}\right).
\end{equation}

Similarly, we apply the chain rule for the cyclic derivative:
\begin{align*}
    \mathcal{D}_Y = (1 + J_Y \mathcal{D}_Y V)\mathcal{D}_X,
\end{align*}
obtaining
\begin{multline}
    Y+\mathcal{D}_{Y}V(Y)+(1+J_{Y}\mathcal{D}_{Y}V)^{-1}\mathcal{D}_{Y}W(Y+\mathcal{D}_{Y}V(Y))\\
    =J_{Y}^{*}\left(\frac{1}{1+J_{Y}\mathcal{D}_{Y}V(Y)}\right). \label{eq:allY}
\end{multline}

In this equation, $1$ is the identity matrix in $M_{n}(M\otimes M^{op})$, the $n \times n$ matrix with $1\otimes 1$ in all its diagonal entries. We know that $1+J_{Y}\mathcal{D}_{Y}V$ is invertible in this space provided that $J_{Y}\mathcal{D}_{Y}V$ has norm less than 1. In our next step, we will be restricting $V$ to a smaller set still, so invertibility is guaranteed.

As $X$ has been removed from our equation and all derivatives are with respect to $Y$ now, we will assume this going forwards and neglect the subscripts.  We expand the right hand side of (\ref{intermediate-equation}) as 
\begin{align*}
J^*\left(\frac{1}{1 + J \mathcal{D} V}\right)
    &= J^* (1) - J^* \left(\frac{J \mathcal{D} V}{1 + J \mathcal{D} V}\right) 
\\
    &= Y + \mathcal{D} V - J^* \left(\frac{J \mathcal{D} V}{1 + J\mathcal{D} V}\right),
\end{align*}
Performing the resulting cancellation and multiplying (\ref{eq:allY}) by $(1+J\mathcal{D}V)$ gives
\begin{align}
    \mathcal{D}W(Y+\mathcal{D}V)=-(1+J\mathcal{D}V)J^{*}\left(\frac{J\mathcal{D}V}{1+J\mathcal{D}V}\right)\label{eq:gettin}
\end{align}

We will expand the right hand side of (\ref{eq:gettin}) and then simplify with the following identity from \cite[Lemma 3.4]{dima}:
\begin{multline}
    \frac{1}{m+1}\mathcal{D}\left[(\tau\otimes 1+1\otimes \tau) \Tr(Jf^{m+1})\right]=-J^{*}(Jf^{m+1})+JfJ^{*}(Jf^{m}).
\end{multline}

Expanding the right hand side of (\ref{eq:gettin}) yields
\begin{align*}
\sum_{n=1}^{\infty} & (-1)^{n} J^* (J \mathcal{D} V^n) + (-1)^{n} J \mathcal{D} V J^* (J \mathcal{D} V^{n}) 
\\
&= -J^{*}(J\mathcal{D}V) + \sum_{n=1}^{\infty}(-1)^{n}(J\mathcal{D}VJ^{*}(J\mathcal{D}V^{n})-J^{*}(J\mathcal{D}V^{n+1}))
\\
&= -J\mathcal{D}VJ^{*}(1)+\mathcal{D}\left[(1\otimes\tau + \tau\otimes1)\Tr\left(J\mathcal{D}V+\sum_{n=1}^{\infty}\frac{(-1)^{n}}{n+1}J\mathcal{D}V^{n+1}\right)\right]
\\
&= -J\mathcal{D}VJ^{*}(1)+\mathcal{D}\left[(1\otimes\tau+\tau\otimes 1)\Tr(\log(1+J\mathcal{D}V))\right]
\\
&= -J\mathcal{D}V\cdot Y-J\mathcal{D}V\cdot \mathcal{D}V+\mathcal{D}\left[(1\otimes\tau+\tau\otimes 1)\Tr(\log(1+J\mathcal{D}V))\right]
\end{align*}

We're left with
\begin{multline}
    \mathcal{D}(W(Y+\mathcal{D}V)) = - J\mathcal{D}V \cdot Y - J \mathcal{D} V \cdot \mathcal{D} V 
        \\
        + \mathcal{D}\left[(1 \otimes \tau + \tau \otimes 1) \Tr \log (1 + J \mathcal{D} V)\right] \label{all-but-two-terms}
\end{multline}
which nearly expresses the equation as a total (cyclic) derivative.  All that remains is writing the first two terms of the right hand side of (\ref{eq:gettin}) above as cyclic derivatives. Analyzing the remaining two terms of (\ref{eq:gettin}), we make use of the operators defined earlier, noticing
\[Jg\cdot Y=\mathcal{N}g\]
for any $g$. Thus, when $g=\mathcal{D}V$, we get
\[J\mathcal{D}V\cdot Y=\mathcal{N}\mathcal{D}V=\mathcal{D}(\mathcal{N}-1)V\]
We can also see that 
\[J\mathcal{D}V\cdot \mathcal{D}V=\mathcal{D}\left(\frac{\mathcal{D}_{1}V^{2}+\mathcal{D}_{2}V^{2}+...+\mathcal{D}_{n}V^{2}}{2}\right)=\mathcal{D}\left(\frac{|\mathcal{D}V|^{2}}{2}\right).\]

So equation (\ref{eq:gettin}) can be rewritten as 
\begin{multline}
\mathcal{D}\left[W(Y+\mathcal{D}V)+(\mathcal{N}-1)V+\frac{|\mathcal{D}V|^{2}}{2}\right.\\-(1\otimes\tau+\tau\otimes 1)\Tr(\log(1+J\mathcal{D}V))\Big]=0\label{eq:BigD}
\end{multline}

Since $\mathcal{D}$ only sees the cyclically symmetric part of power series, and does not see constants, this is equivalent to the desired equation 
\begin{align*}
\mathcal{S}\Pi \Big[ W(Y+\mathcal{D}V) &+ (\mathcal{N}-1)V + \frac{|\mathcal{D}V|^{2}}{2} 
\\
&- (1\otimes\tau+\tau\otimes 1)\Tr(\log(1+J\mathcal{D}V))\Big]=0
\end{align*}

Thus concludes this lemma as well as the first step in the proof of Theorem \ref{even-condition}, deriving a differential equation for $V$ in which all terms are cyclic derivatives.
\end{proof}

We proceed to the second step in the proof of Theorem \ref{even-condition}, where we "integrate" the above equation to find a map to which we can apply the contraction mapping theorem in order to find a solution.

We rephrase the differential equation in Lemma \ref{step1-lemma-of-even-condition}:
\begin{multline}
\mathcal{S}\Pi\mathcal{N} V= \mathcal{S}\Pi\Big[-W(Y+\mathcal{D}V) + V - \frac{|\mathcal{D}V|^{2}}{2} 
\\
+ (1\otimes\tau+\tau\otimes 1)\Tr(\log(1+J\mathcal{D}V))\Big]. \label{eq:Picard}
\end{multline}
It will be more useful to solve for $\tilde{V}=S\Pi\mathcal{N}V$, which must satisfy 
\begin{multline}\tilde{V}=\mathcal{S}\Pi\\\left[-W(Y+\mathcal{D}\Sigma \tilde{V})+\Sigma \tilde{V}-\frac{|\mathcal{D}\Sigma \tilde{V}|^{2}}{2}+(1\otimes\tau+\tau\otimes 1)\Tr(\log(1+J\mathcal{D}\Sigma \tilde{V}))\right]\label{eq:Picard2}
\end{multline}

Whenever necessary, we will denote the right hand side by $F(\Sigma \tilde{V})$. We will show that there is a set on which $F(\Sigma \cdot)$ is a contraction. Along the way, we must prove two lemmas.
\begin{lemma}
$F(\Sigma\cdot )$ preserves evenness of power series. In other words, if $U$ has only terms of even degree, then $F(\Sigma U)$ also has only terms of even degree. 
\end{lemma}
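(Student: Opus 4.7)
The plan is to decompose $F(\Sigma U)$ into its four summands and verify that each one, when restricted to the inner bracket before $\mathcal{S}\Pi$ is applied, contains only monomials of even total degree. Since $\mathcal{S}$ preserves the degree of each monomial and $\Pi$ merely kills the constant term, evenness of the bracket implies evenness of $F(\Sigma U)$. Throughout, call a power series \emph{even} (resp.\ \emph{odd}) if it is a sum of monomials whose total degree is even (resp.\ odd), and call an element $\sum a_i\otimes b_i$ of $M\otimes M^{op}$ \emph{tensor-even} if each $\deg a_i+\deg b_i$ is even. Note that $\Sigma$ and $\mathcal{N}$ preserve the degree of each monomial, while $\partial_{x_k}$ and $\mathcal{D}_{x_j}$ decrease degree by $1$; thus if $U$ is even, $V:=\Sigma U$ is even and $\mathcal{D}V,\partial_{x_k}\mathcal{D}_{x_j}V$ are odd and tensor-even respectively.

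The first key input is a symmetry observation: because $V=\Sigma U$ is even, the potential $\tfrac12|Y|^2+V$ is invariant under $Y\mapsto -Y$, so by the uniqueness of the solution to the bounded Schwinger--Dyson equation (which is symmetric under this sign flip), the associated free Gibbs law $\tau$ used in $F$ satisfies $\tau(m)=0$ on every monomial $m$ of odd degree. With this at hand, the four bracketed terms are handled as follows. (i) Since $Y$ is odd and $\mathcal{D}V$ is odd, $Y+\mathcal{D}V$ is a tuple of odd power series; substituting into a monomial of even degree $2k$ in $W$ gives a product of an even number of odd series, and the total degree of any resulting monomial is a sum of $2k$ odd integers, hence even. (ii) $\Sigma U$ is even by construction. (iii) For each $i$, $\mathcal{D}_i\Sigma U$ is odd, so $(\mathcal{D}_i\Sigma U)^2$ is a sum of products of two odd monomials and is therefore even.

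Term (iv), the logarithmic trace term, is the main obstacle and the place where the symmetry of $\tau$ is essential. Each entry $(J\mathcal{D}V)_{jk}=\partial_{x_k}\mathcal{D}_{x_j}V$ is tensor-even. Multiplication in $M\otimes M^{op}$ adds degrees in each tensor slot separately, so products of tensor-even elements remain tensor-even; hence every power $(J\mathcal{D}V)^n$, and thus the matrix logarithm $\log(1+J\mathcal{D}V)$ (defined by its convergent power series without constant term), has tensor-even entries, and the same is true after taking $\Tr$. Now apply $(1\otimes\tau+\tau\otimes 1)$ to a monomial $a\otimes b$ with $\deg a+\deg b$ even: either both $\deg a$ and $\deg b$ are even, in which case $a\,\tau(b)+\tau(a)\,b$ is even, or both are odd, in which case both $\tau(a)$ and $\tau(b)$ vanish by the symmetry of $\tau$ and the term disappears. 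Thus the output is even, and combining (i)--(iv) with the degree-preserving action of $\mathcal{S}\Pi$ completes the proof.
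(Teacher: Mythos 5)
Your proof is correct and follows essentially the same route as the paper's: term-by-term verification of evenness, with the key auxiliary fact (that the Gibbs law $\tau$ vanishes on odd-degree monomials when the potential is even, proved via $Y\mapsto -Y$ symmetry and uniqueness of the Schwinger--Dyson solution) used to dispatch the logarithmic trace term after observing that its tensor entries have even total degree. The ``tensor-even'' bookkeeping you introduce is just a convenient name for the same observation the paper makes.
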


In turn, proving this requires an easy proposition:
\begin{proposition} If $U$ is a potential which contains only even terms, then $\tau_{U}(P)=0$ for any polynomial $P$ which contains only odd terms. 
\end{proposition}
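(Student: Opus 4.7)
The plan is to exploit the symmetry of the problem under the sign-flip automorphism. Let $\sigma$ be the automorphism of $\mathbb{C}\langle X_1,\dots,X_n\rangle$ defined on generators by $\sigma(X_i) = -X_i$; it acts on a monomial $X_I$ of length $|I|$ by multiplication by $(-1)^{|I|}$. The statement that $\tau_U(P) = 0$ for odd $P$ is equivalent to saying that $\tau_U$ is $\sigma$-invariant (on even $P$ it is automatic, and on odd $P$ it gives $\tau_U(P) = -\tau_U(P)$, hence $\tau_U(P) = 0$). So I will show that $\tilde\tau := \tau_U \circ \sigma$ coincides with $\tau_U$, and then invoke uniqueness of the bounded Schwinger--Dyson solution from the previously stated proposition.

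First I would check that $\tilde\tau$ is a tracial state satisfying the same norm cutoff as $\tau_U$, which is immediate because $\sigma$ is a $*$-automorphism of the free algebra with $\|\sigma(P)\|_A = \|P\|_A$ for every $A > 0$. Next I would verify that $\tilde\tau$ solves the Schwinger--Dyson equation with the same potential $U$. The two ingredients are: (i) since $U$ contains only even-degree terms, $\mathcal{D}_{x_i} U$ contains only odd-degree terms, so $\sigma(\mathcal{D}_{x_i}U) = -\mathcal{D}_{x_i}U$; (ii) a direct computation on a monomial $X_{i_1} \cdots X_{i_k}$ shows $(\sigma\otimes\sigma)\bigl(\partial_{x_j}P\bigr) = -\partial_{x_j}(\sigma P)$, because the left side picks up $(-1)^{k-1}$ while $\partial_{x_j}(\sigma P)$ picks up $(-1)^k$. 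Combining these, for any tuple $P = (P_1,\dots,P_n)$,
\begin{align*}
\sum_i \tilde\tau\bigl(P_i \cdot \mathcal{D}_{x_i} U\bigr) &= \sum_i \tau_U\bigl(\sigma(P_i)\sigma(\mathcal{D}_{x_i}U)\bigr) = -\sum_i \tau_U\bigl(\sigma(P_i)\cdot \mathcal{D}_{x_i}U\bigr) \\
&= -\sum_i \tau_U\otimes\tau_U\bigl(\partial_{x_i}\sigma(P_i)\bigr) = \sum_i \tau_U\otimes\tau_U\bigl((\sigma\otimes\sigma)\partial_{x_i} P_i\bigr) \\
&= \sum_i \tilde\tau\otimes\tilde\tau\bigl(\partial_{x_i} P_i\bigr),
\end{align*}
where the third equality is Schwinger--Dyson applied to the tuple $(\sigma(P_1),\dots,\sigma(P_n))$ for $\tau_U$. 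The trace $\tilde\tau\otimes\Tr$ version follows termwise. Thus $\tilde\tau$ solves the same bounded Schwinger--Dyson system as $\tau_U$.

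By the uniqueness clause of the cutoff proposition (which applies here because $U = \tfrac{1}{2}|Y|^2 + V$ with $V$ small in the $\|\cdot\|_T$-norm is the setting in which we are working), $\tilde\tau = \tau_U$, and the conclusion follows as noted above. The only potential obstacle is to make sure the computation that $\tilde\tau$ satisfies Schwinger--Dyson really goes through with the correct signs; the two sign flips from (i) and (ii) must cancel, which they do precisely because $U$ has only \emph{even} degree terms, so this is where the evenness hypothesis is used in an essential way.
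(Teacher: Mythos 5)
Your proof is correct and takes essentially the same approach as the paper: the paper shows that $Y=-X$ also satisfies the Schwinger--Dyson equation with potential $U$ and invokes uniqueness, which is precisely your statement that $\tilde\tau = \tau_U \circ \sigma$ is a solution, so $\tilde\tau = \tau_U$. Your version merely phrases the sign-flip as a $*$-automorphism and spells out the sign bookkeeping for $\mathcal{D}$ and $\partial$ more explicitly.
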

\begin{proof}
This is a corollary of uniqueness of free Gibbs measures, \cite{Gui06}. In particular, if $X$ has free Gibbs law $\tau_{U}$, then $Y=-X$ also satisfies
\[\tau(P(Y)\cdot \mathcal{D}_{Y}U(Y))=-\tau(P(-X)\cdot \mathcal{D}_{X}U(X))\]
\[=-\tau\otimes\tau(\Tr(J_{X}P(-X)))=\tau\otimes\tau (\Tr(J_{Y}P(Y)))\]
So by uniqueness, $-X$ has the same law as $X$, yet $\tau(P(X))=-\tau(P(-X))$ for any odd polynomial, so this must be zero.
\end{proof}

\begin{proof}[\proofname\ of Lemma 3.4]
We must check that each term preserves evenness. The term $W(Y+\mathcal{D}\Sigma V)$ certainly does, since all terms in $W$ are even, and all terms in $Y+\mathcal{D}\Sigma V$ are odd. The term $\Sigma V$ is most immediate of all, and every term in $|\mathcal{D}\Sigma V|^{2}$ is a product of two odd factors. To see that the log term also preserves this, we expand it into its Taylor series 

\[\sum_{n}\frac{(-1)^{n}}{n}(1\otimes \tau+\tau\otimes 1)\Tr((J\mathcal{D}\Sigma V)^{n})\]

Considering now a fixed $n$, we see that each term in $J\mathcal{D}\Sigma V$ is of the form $a\otimes b$ where the degrees of $a$ and $b$ sum to an even number. The same is thus true of all powers. If $(1\otimes \tau)$ or $(\tau \otimes 1)$ were to produce a term with odd degree, it would be multiplied by $\tau(a)$ where $a$ also had odd degree, and so, by the proposition, would be zero.
\end{proof}

We note that the $\log(1+\mathcal{J}\mathcal{D}\Sigma\tilde{V})$ and  $W(Y+\mathcal{D}\Sigma V)$ terms produce the requirement that $W$ be even.  If $V$ contains any terms of odd degree, both of these terms can produce linear (degree one) terms in $F(\Sigma V)$ on which $\Sigma V$ is not strictly contractive.

Next, we introduce the sets that we will consider as domains for $F(\Sigma\cdot)$: $E\cap B_{A,R}$ where $E$ is the space of nc power series with only even, positive degree terms, and $B_{A,R}$ is the ball of $||\cdot ||_{A}$ radius $R$. The previous lemma shows that $F(\Sigma\cdot)$ preserves $E$. We also need
\begin{lemma}
If $A\geq 1$, $F(\Sigma\cdot)$ has a Lipschitz constant on $B_{A,R}\cap E$ bounded above by
\[\frac{1}{2}+\left|\left|\sum_{i}\partial_{i}W\right|\right|_{B\otimes B}+R+\frac{4R}{A^{2}-2R}\]
where 
\[||\sum_{I,J}a_{I}b_{J}X_{I}\otimes X_{J}||_{A\otimes B}=\sum_{I,J}|a_{I}||b_{j}|A^{|I|}B^{|I|}\]
and, in the above bound, $B=A+R$. 

Moreover
\[||F(\Sigma\cdot)||_{A} \leq ||W||_{B}+||V||_{A}\left(\tfrac{1}{2}+R+\frac{4R}{A^{2}-2R}\right)\]

\end{lemma}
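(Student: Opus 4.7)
The plan is to bound $F(\Sigma\tilde V)$ summand by summand. Because $\mathcal{S}$ and $\Pi$ are contractions for $\|\cdot\|_A$, it suffices to estimate the four terms inside the brackets of (\ref{eq:Picard2}) separately: $\Sigma\tilde V$, $-W(Y+\mathcal{D}\Sigma\tilde V)$, $-\tfrac12|\mathcal{D}\Sigma\tilde V|^2$, and the log-trace term. Three preliminary monomial-counting estimates underlie the argument on $B_{A,R}\cap E$ for $A\ge 1$: (i) $\|\Sigma\tilde V\|_A\le\tfrac12\|\tilde V\|_A$, since $E$ contains only terms of degree $\ge 2$ and $\Sigma$ divides by degree; (ii) $\sum_i\|\mathcal{D}_i\Sigma\tilde V\|_A\le\|\tilde V\|_A/A\le\|\tilde V\|_A$, obtained by observing that a monomial of length $n$ generates $n$ cyclic-derivative terms of length $n-1$ whose count cancels the $1/n$ from $\Sigma$; and (iii) an analogous two-derivative count giving $\|J\mathcal{D}\Sigma\tilde V\|\le C\|\tilde V\|_A/A^2$ in the natural matrix norm on $M_n(M\otimes M^{op})$.

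Given these, the first two summands are straightforward. The $\Sigma\tilde V$ summand contributes $\tfrac12\|\tilde V\|_A$ to the norm bound and $\tfrac12$ to the Lipschitz constant by (i). For $W(Y+\mathcal{D}\Sigma\tilde V)$, estimate (ii) gives $\|Y+\mathcal{D}\Sigma\tilde V\|_A\le A+R=B$, whence $\|W(Y+\mathcal{D}\Sigma\tilde V)\|_A\le\|W\|_B$. For the Lipschitz constant I would invoke the noncommutative telescoping identity: each monomial of $W$ at each internal position $k$ contributes a product $U_k\cdot(Z_{1,i}-Z_{2,i})\cdot V_k$, where $U_k$ and $V_k$ are words in $Z_1$ and $Z_2$. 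Bounding each $U_k, V_k$ in $\|\cdot\|_A$ by powers of $B$ and collecting terms, together with the observation that $\|\sum_i\partial_i W\|_{B\otimes B}=\sum_i\|\partial_i W\|_{B\otimes B}$ (since the different splits index linearly independent basis tensors), yields $\|W(Z_1)-W(Z_2)\|_A\le\|\sum_i\partial_i W\|_{B\otimes B}\cdot\sup_i\|(Z_1-Z_2)_i\|_A$, which with (ii) produces the Lipschitz constant $\|\sum_i\partial_i W\|_{B\otimes B}$.

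For the quadratic summand, submultiplicativity together with (ii) yields the norm bound $\tfrac12\||\mathcal{D}\Sigma\tilde V|^2\|_A\le R\|\tilde V\|_A$, while the factorization $(\mathcal{D}_i\Sigma\tilde V_1)^2-(\mathcal{D}_i\Sigma\tilde V_2)^2=(\mathcal{D}_i\Sigma\tilde V_1)(\mathcal{D}_i\Sigma(\tilde V_1-\tilde V_2))+(\mathcal{D}_i\Sigma(\tilde V_1-\tilde V_2))(\mathcal{D}_i\Sigma\tilde V_2)$ produces the Lipschitz constant $R$. The main obstacle will be the log-trace summand: the plan is to expand $\log(1+J\mathcal{D}\Sigma\tilde V)=\sum_{n\ge 1}\frac{(-1)^{n+1}}{n}(J\mathcal{D}\Sigma\tilde V)^n$ and bound each term using (iii) together with $|\tau(P)|\le\|P\|_A$ (so that applying the state to one leg of a tensor does not worsen the $\|\cdot\|_A$ estimate). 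Keeping one factor of $\|\tilde V\|_A$ explicit and bounding the remaining $n-1$ factors of $J\mathcal{D}\Sigma\tilde V$ by $2R/A^2$ gives a term of order $(2R/A^2)^{n-1}\|\tilde V\|_A/A^2$ after the trace and tensor-state simplifications; summing this geometric series under the hypothesis $2R<A^2$ produces the norm contribution $4R/(A^2-2R)\cdot\|\tilde V\|_A$. The Lipschitz contribution of the same magnitude arises by telescoping $(J\mathcal{D}\Sigma\tilde V_1)^n-(J\mathcal{D}\Sigma\tilde V_2)^n=\sum_{k=1}^n(J\mathcal{D}\Sigma\tilde V_1)^{k-1}(J\mathcal{D}\Sigma(\tilde V_1-\tilde V_2))(J\mathcal{D}\Sigma\tilde V_2)^{n-k}$, where the resulting factor of $n$ cancels the $1/n$ in the logarithm series. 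Summing the four contributions reproduces both displayed bounds.
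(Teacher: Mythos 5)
Your proposal reproduces the paper's result by the same overall strategy — treat $F(\Sigma\cdot)$ term by term, expand the $\log(1+J\mathcal{D}\Sigma V)$ in a Neumann series, and sum the resulting geometric series — but is considerably more self-contained. The paper disposes of most of the work by citing Cor.~3.12 of \cite{GS} and Lemma~3.8 of \cite{dima}, only writing out the $\Sigma V$ correction, the norm bound $\|W(Y+\mathcal{D}\Sigma V)\|_A\le\|W\|_B$, and the modified geometric-series sum for the log term. You instead redo the $W$ and quadratic estimates from the basic derivative-counting facts (your preliminaries (i)--(iii)), which is a reasonable and arguably more transparent route. The calculations agree with the paper's where they overlap: $\tfrac12$ from $\Sigma$, $R$ from the quadratic term, $\tfrac{4R}{A^2-2R}$ from the log term via telescoping $\bigl(J\mathcal{D}\Sigma V_1\bigr)^n-\bigl(J\mathcal{D}\Sigma V_2\bigr)^n$ with the factor $n$ cancelling $1/n$, and $\|W\|_B$ from $\max_i\|Y_i+\mathcal{D}_i\Sigma V\|_A\le B$.

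One caveat: your parenthetical justification that $\bigl\|\sum_i\partial_i W\bigr\|_{B\otimes B}=\sum_i\|\partial_i W\|_{B\otimes B}$ because ``the different splits index linearly independent basis tensors'' does not hold for general $W$. Within a single monomial the splits at different letters do give distinct basis tensors, but distinct monomials of $W$ can produce coinciding split tensors under different $\partial_i$, and signed coefficients can then cancel in the $\ell^1$ norm; a small example is $W=x_1x_2x_3-x_3x_2x_3$, where the $1\otimes x_2x_3$ contributions from $\partial_1$ and $\partial_3$ cancel. Your telescoping estimate actually produces the larger quantity $\sum_i\|\partial_i W\|_{B\otimes B}$, which is what the argument needs. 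The discrepancy is inherited from the lemma statement (and from \cite{GS}), and is harmless downstream since the theorem only uses the cruder majorization $\le\|W\|_{A+1}$ which bounds $\sum_i\|\partial_i W\|_{A\otimes A}$ as well — but as written your parenthetical claim is an overstatement and should be weakened to the one-sided inequality via the triangle inequality.

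Minor quibble: in your description of the per-term geometric bound for the log term, the stated per-term magnitude $(2R/A^2)^{n-1}\|\tilde V\|_A/A^2$ sums to $\|\tilde V\|_A/(A^2-2R)$, not $4R\|\tilde V\|_A/(A^2-2R)$; you need the additional factor $2$ from $(1\otimes\tau+\tau\otimes 1)$ and a leading factor of $2R/A^2$ rather than $1/A^2$. The final constant you assert is nevertheless the paper's.
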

\begin{proof}

Most of this proof can be reduced to an appeal to Cor. 3.12 in \cite{GS}. However, two terms deserve a comment.

Unlike $\cite{GS}$, our $F$ contains a $\Sigma V$:
\[||\Sigma V-\Sigma U||_{A}\leq \frac{1}{2} ||V-U||_{A}\]
Which follows immediately from the fact that all terms in $U$ and $V$ are of order 2 or greater. 

Additionally, our bound for the log term is different from that in $\cite{GS}$ so we briefly comment on it's proof. We begin by taylor expanding the log term as
\[\sum_{n}\frac{(-1)^{n}}{n}(1\otimes \tau+\tau\otimes 1)\Tr\left((J\mathcal{D}\Sigma V)^{n}-(J\mathcal{D}\Sigma U)^{n}\right),\]
a notationally tedious, but otherwise straightforward calculation then shows that this is bounded in norm by 
\[\sum_{n}\frac{2^{n+1}R^{n}}{A^{2n}}||V-U||_{A}=||V-U||_{A}\frac{4R}{A^{2}-2R},\]
see \cite[Lemma 3.8]{dima} for more detail.

We can then obtain the desired bounds almost immediately from the Lipschitz constants, with the only exception being the $W$ term, for which we use the bound
\[||W(Y+\mathcal{D}\Sigma V)||_{A}\leq ||W||_{B}\]
which follows from $\max(||Y_{i}+\mathcal{D}_{i}\Sigma V||_{A})\leq B$.
\end{proof}

We are now equipped to prove Theorem \ref{even-condition}.

\begin{proof}[\proofname\ of Theorem \ref{even-condition}]
We fix a cutoff $3\geq T>2$. We then choose $A=3$ and an $R<1/4$ so that $||V||_{A}<R$ implies the existence of a unique free Gibbs law with support bounded by $T$.  Then we find that the Lipschitz constant of $F(\Sigma\cdot)$ is bounded by:
\[\left|\left|\sum_{i}\partial_{i}W\right|\right|_{\tfrac{13}{4}\otimes \tfrac{13}{4}}+\frac{1}{4}+\frac{1}{9-1/2}\leq ||W||_{\tfrac{17}{4}}+\frac{59}{68}\]
Where we have used that 
\[\left|\left|\sum_{i}\partial_{i}W\right|\right|_{A\otimes A}\leq \sum_{I}|W_{I}||I|A^{|I|-1}\leq \sum_{I}|W_{I}|(1+A)^{|I|}=||W||_{A+1}\]
Moreover, $||V||_{A}<R$ implies that 

\[||F(\Sigma V)||_{A}\leq ||W||_{\tfrac{13}{4}}+R\left(\frac{1}{2}+R+\frac{4R}{9-2R}\right)\]
\[\leq ||W||_{\tfrac{13}{4}}+\frac{59}{68}R\]

Since 
\[||W||_{\tfrac{13}{4}}\leq ||W||_{\tfrac{17}{4}}\]

We find that if $||W||_{\tfrac{17}{4}}<\frac{9}{68}$ then $F(\Sigma \cdot)$ will be a contraction, and if $||W||_{\tfrac{17}{4}} < \frac{9}{68}R$, then we can also be assured that it will map $E\cap B_{A,R}$ into itself, so we have a fixed point $V$. We then find that $\tilde{V}=\Sigma V$ satisfies the conclusion of the theorem, since $\Sigma$ can only decrease $||\cdot||_{A}$. Thus, we take $C=\tfrac{17}{4}$ and $\epsilon=\tfrac{9}{68}R$.
 
\end{proof}

\section{Open Questions}

There are several interesting questions that remain.  Foremost is whether we can remove the evenness restriction on power series $W$ in Theorem \ref{even-condition} to get a result more in line with the single variable case. More broadly, is it possible to generalize the proof in Section 2 to the multivariable case, which requires more work extending variational techniques to the non-commutative setting.

Second, free probability can in many ways be considered as a model of large random matrices. For example, free Gibbs laws are the limit of corresponding Gibbs laws on large random matrices. Is this also true for moment measures and their potentials? In particular, if we apply the classical result to large matrix models, does the result converge (in law or otherwise) to the free Gibbs law corresponding to the free moment measure. 

We have managed to reduce this question to the following.  Let $X = (\mathcal{D}U) Y$ where $X$ has free Gibbs law $\tau_T$ and $Y$ has free Gibbs law $\tau_U$, and construct $\nu_N$ measures on $\R^{n N^2}$ as the matrix model for the law $\tau_T$  and measures $\mu_N$ for the matrix model for $\tau_U$.  Let $\rho_N = e^{-U_N} \,dx$ where $U_N$ is the classical moment measure potential for $\nu_N$.  The problem can be reduced to the question of whether $\tfrac{1}{N} W_2((\mathcal{D}U)_{\#} \rho_N, \nu_N) \to 0$, where $W_2$ is the classical Wasserstein distance between these two measures.  

This in turn raises a general question.  Does the classical Wasserstein distance between two sequences of measures for matrix models of nc laws converge to the (noncommutative) Wasserstein distance (see \cite{BV-Wasserstein}) between the nc laws themselves?  What about the (smaller) noncommutative Wasserstein distance between the matrix laws?

Note that a similar statement, convergence in nc law implies convergence in Wasserstein distance, is not true in general (and can be seen as a consequence of the falsehood of the Connes embedding conjecture).  This is in contrast to the classical case, where Wasserstein convergence and convergence in law are equivalent for random variables with a uniform bound.

Finally, there is the question of whether this and related works can be extended to a broader class of laws than just free Gibbs laws with power series potentials.

\appendix
\section{Computing Free Gibbs Laws for Single Variables}

This section is intended as a quick overview to methods for solving the equation $2\pi H(\rho)=u'$ among measures on $\mathbb{R}$.  We consider the Cauchy transform
\[G_{\rho}(z)=\int \frac{1}{z-t}d\rho\]
which in particular satisfies 
\[\lim_{y\downarrow 0}G_{\rho}(x+iy)=\pi (H(\rho)-i\rho)(x)\]

We would like to find $G$ using the fact that its real part is known, but we only know this real part on the support of $\rho$ (which is also, a priori, unknown). This is remedied by noticing that $G$ is an analytic function on the Riemann sphere minus the support of $\rho$. With convex potentials, the support of $\rho$ is connected, so we may assume that $G$ is an analytic function away from some compact subinterval of $\mathbb{R}$. For the sake of brevity, we will assume that the potential is even, so the measure is supported on a symmetric interval $[-r,r]$. We then try to find the Cauchy transform as 
\[G(z)=F(R(z))\]
for some $F$, holomorphic on the interior of the disk, and 
\[R(z)=\frac{\sqrt{z^{2}-r^{2}}-z}{r}\]
the Riemann mapping from $S^{2}\setminus [-r,r]$ to the disk. We make note of the inverse of this map:
\[S(w)=-\frac{r(1+w^{2})}{2w}=\frac{r\left(\frac{w+1}{1-w}\right)^{2}+r}{1-\left(\frac{w+1}{1-w}\right)^{2}}\]
The defining equation of $\rho$ now gives that 
\[\lim_{z\rightarrow e^{i\theta}}F(z)=\tfrac{1}{2}u'(-r\cos(\theta))-i\pi\sgn(\sin(\theta))\rho(-r\cos(\theta))\]
In particular, the real part is enough to compute the Taylor series for $F$; if $F=\sum a_{n}z^{n}$, then 
\[a_{n}=\frac{1}{\pi}\int u'(-rcos(\theta))e^{-in\theta}d\theta\]

What remains is to fix $r$; we consider a contour $\gamma_{\epsilon}$ which traces the rectangle with sides $re(z)=\pm r$ and $im(z)=\pm \epsilon$, oriented clockwise. Since we know the limit of $G$ as $z$ approaches the axis and that $\rho$ is a probability measure, we can see on the one hand that 
\[\int_{\gamma_{\epsilon}}G(z)dz=-2\pi i\]
but on the other that
\[\int_{\gamma_{\epsilon}} G(z) dz = \int_{\gamma_{\epsilon}} F(R(z)) dz\rightarrow \int_{S^1} F(w) S'(w) dw \]
\[= \frac{r}{2} \int_{S^{1}} F(w) \left(\frac{1}{w^2} - 2w\right) = r\pi i a_{1} \]
Whence,
\[r a_1 = -2\]

We illustrate the process with the potential $u=x^4/4$ from section 2.5. We have that 
\[\textrm{Re}(F(e^{i\theta}))=-\tfrac{r^{3}}{2}\cos^{3}(\theta)=-\tfrac{r^{3}}{8}(\cos(3\theta)+3\cos(\theta))\]
so $a_{1}=-\tfrac{3}{8}r^{3}$ and $a_{3}=-\tfrac{1}{8}r^{3}$, and all other Taylor coefficients are zero. We then fix $r$ using the equation
\[ra_{1}=-2\Rightarrow r^{4}=\tfrac{16}{3}\]

Then we see that 
\[-\pi*\nu(-r\cos(\theta))=\textrm{Im}(F(e^{i\theta}))=-\tfrac{r^{3}}{8}(\sin(3\theta)+3\sin(\theta))\]

so
\[\nu(x)=\tfrac{r^{3}}{8\pi}\left(4\frac{x^{2}}{r^{2}}+2\right)\sqrt{1-\frac{x^{2}}{r^{2}}}.\]

%\nocite{biane}
%\nocite{BnS}
%\nocite{BV-Wasserstein}
%\nocite{dima}
%\nocite{Gui06}
%\nocite{klartag}
%\nocite{santambrogio}
%\nocite{Voi3}
%\nocite{Voi4}
%\nocite{Voi5}
%\nocite{Voiculescu}
%\nocite{Voiog}
%\nocite{transport}
%\nocite{villani}
%\nocite{Voi1}
%\nocite{Voi2}

% Not relevant without matrix section:
%\nocite{GJNS}
%\nocite{GMS}
%\nocite{GMS2}
%\nocite{GS}
%\nocite{Gui09} 

% command used for biblatex
% \printbibliography

% for standard bibliography:
\bibliography{references.bib}

\end{document}